\newcommand{\R}{\mathbb{R}}
\newcommand{\C}{\mathbb{C}}
\newcommand{\eps}{\varepsilon}
\newcommand{\vth}{\vartheta}
\newcommand{\weakto}{\rightharpoonup}
\newcommand{\del}{\partial}
\newcommand{\one}{\mathrm{\bf 1}}
\def\dvect#1#2#3{\left(\begin{matrix}#1\cr #2\cr #3\end{matrix}\right)}
\def\calF{\mathcal{F}}
\def\calS{\mathcal{S}}
\def\calR{\mathcal{R}}
\def\calQ{\mathcal{Q}}
\newtheorem{theorem}{Theorem}[section]
\newtheorem{definition}[theorem]{Definition}
\newtheorem{lemma}[theorem]{Lemma}
\newtheorem{assumption}[theorem]{Assumption}
\newtheorem{remark}[theorem]{Remark}
\numberwithin{equation}{section}
\begin{document}

\thispagestyle{empty}
\begin{center}
  ~\vskip6mm {\Large\bf Representation of solutions to  wave\\[3mm]
    equations with profile functions
  }\\[9mm]

  {\large Agnes Lamacz\footnotemark[1] and Ben Schweizer\footnotemark[2]}\\[4mm]

  May 17, 2019\\[3mm]

\end{center}

\footnotetext[1]{Fakult\"at f\"ur Mathematik, U Duisburg-Essen,
  Thea-Leymann-Stra\ss e 9, 45127 Essen, Germany. {\tt
    agnes.lamacz@uni-due.de}}

\footnotetext[2]{Fakult\"at f\"ur Mathematik, TU Dortmund,
  Vogelspothsweg 87, 44227 Dortmund, Germany. {\tt
    ben.schweizer@tu-dortmund.de}}

\pagestyle{myheadings} \markboth{Representation of solutions to wave
  equations with profile functions}{A. Lamacz and B. Schweizer}

\begin{center}
   \vskip2mm
   \begin{minipage}[c]{0.87\textwidth}
     {\bf Abstract:} Solutions to the wave equation with constant
     coefficients in $\R^d$ can be represented explicitly in Fourier
     space. We investigate a reconstruction formula, which provides an
     approximation of solutions $u(.,t)$ to initial data $u_0(.)$ for
     large times. The reconstruction consists of three steps: 1) Given
     $u_0$, initial data for a profile equation are extracted. 2) A
     profile evolution equation determines the shape of the profile at
     time $\tau = \eps^2 t$. 3) A shell reconstruction operator
     transforms the profile to a function on $\R^d$. The sketched
     construction simplifies the wave equation, since only a
     one-dimensional problem in an $O(1)$ time span has to be
     solved. We prove that the construction provides a good
     approximation to the wave evolution operator for times $t$ of
     order $\eps^{-2}$.\\[-1mm]

    {\bf MSC:} 35L05, 35C99, 35Q60\\[-1mm]

    {\bf Keywords:} large time asymptotics, wave equation, effective
    equation, dispersion
   \end{minipage}\\[2mm]
\end{center}


\section{Introduction}

In many applications, one observes solutions of a wave equation that
have the shape of a ring. This can be understood as an effect of large
times: the initial data of the problem are concentrated in a bounded
domain and send waves in every direction. Each of the different waves
travels at the same speed $c$ and after a large time $t$, the
perturbance of the medium is visible mainly at the distance $ct$.  We
observe a ring-like structure (shell-like in three dimensions).

In more mathematical terms, we are interested (in the simplest
setting) in the long time behavior of solutions $u$ to the linear wave
equation
\begin{equation}
  \label{eq:linearwaves}
  \partial^2_t u(x,t) - c^2\Delta u(x,t) = 0\,.
\end{equation}
In this equation, $x\in \R^d$ is the spatial variable and $t\in
[0,\infty)$ is the time variable, the operator $\Delta = \sum_{j=1}^d
\del_{x_j}^2$ acts only on the spatial variables, and $c>0$ is a
prescribed velocity parameter. The equation is complemented with the
initial conditions $u(x,0)=u_0(x)$ and $\partial_t u(x,0)=u_1(x)$.

Our aim is to characterize the shape of solutions in the limit of
large times. We write $\tau = \eps^{2} t$ for a rescaled time
variable. Our result gives an approximate formula for the function
$x\mapsto u(x,\eps^{-2} \tau)$.  The approximate formula is given by a
sequential execution of three operators: An operator $\calR$ extracts
from the initial data $u_0$ initial data for a profile evolution
equation. An evolution operator $J_b$ describes the evolution of the
profile. Finally, a shell operator $\calS$ reconstructs, from a
profile $V$, a shell like solution $u$; the operator $\calS$ maps the
profile $V$ to a function $u$ which looks like $V$ along every ray
through $0$, whereby the profile is centered in the point $ct$.

We present a mathematical proof that the described reconstruction
operator provides, in the large time limit $\eps\to 0$, an
approximation of the solution $u$. The result is based on a stationary
phase method.

The equation \eqref {eq:linearwaves} is a partial differential
equation with constant coefficients on the full space $\R^d$. This
allows to write the solution explicitely in terms of its Fourier
transform. One solution of the wave equation is given by
\begin{equation}
  \label{eq:solutionwaveeqfourier}
  \hat u(k,t) = e^{-ic|k|t}\hat u_0(k)\,,
\end{equation}
another by the same formula upon replacing $-ic|k|t$ by $+ic|k|t$.  In
this work, we always assume that the initial data $u_1$ are such that
the solution $u$ is given by \eqref {eq:solutionwaveeqfourier}.  This
is not a restriction. General initial data can be treated by an
appropriate decomposition, see \cite{Schweizer-Theil-2017} for
details.

\begin{figure}[ht]
  \centering
  \includegraphics[width=0.45\textwidth]{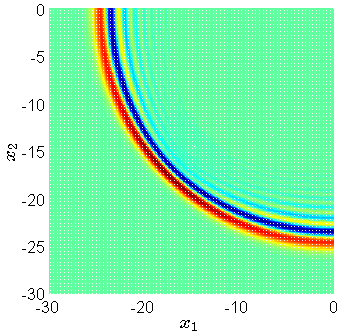}\hfill %
  \begin{tikzpicture}[scale = 0.65]
    \draw[->] (-4.7, 0.0) -- (4.7, 0.0);
    \draw[->] (0.0, -4.7) -- (0.0, 4.7);

    \node[] at (4.5, -.42) {$x_1$};
    \node[] at (0.5, 4.5) {$x_2$};

    \draw (0, 0) circle (3);
    \node[] at (3, 3) {$\{ |x| = ct \}$};

    \draw (0, 0) to (-4,-3);
    \node[] at (-4, -3.4) {$\{ x = zq \}$};
    \end{tikzpicture}
    \caption{Left: The solution to a wave equation. The initial data
      are essentially supported in a unit ball around $x=0$. The wave
      speed of the equation is $c = 1$. After time $t =25$, the
      disturbance of the medium is concentrated in a neighborhood of
      the ring $\{ x\in \R^2 : |x|=ct \}$. The figure shows one
      quadrant and was calculated by T. Dohnal. Right: Sketch for the
      construction of the shell operator $\calS$. A profile $z\mapsto
      V(z;q)$ with a direction parameter $q\in S^{d-1}$ is used along
      the ray $x= zq$; the profile is centered in order to have the
      main pulse near $\{ x\in \R^2 : |x|=ct \}$.}
  \label{fig:num1}
\end{figure}

A simplified version of our main result can be stated as follows.  We
define a reconstruction operator in Definition \ref
{def:reconstr-Q}. Essentially, the operator extracts profile
information from $\hat u_0$ and maps the profiles to a shell-like
solution. The shell solution is obtained by using the profiles, in
each direction, and centering them at the distance $c\tau/\eps^2$ from
the origin.

\begin{theorem}[Simplified version of the main theorem] 
  \label{thm:simple-version}
  Let $\hat u_0 :\R^d \to \C$ be smooth initial data with compact
  support with $d\in \{1,2,3\}$.  For arbitrary $\rho>0$, let
  $\hat{\mathcal{Q}}^\rho_0$ be the reconstruction operator of
  Definition \ref {def:reconstr-Q}.  Then, for every $\tau>0$ and
  every $k\in \R^d$ in Fourier space with $|k|>\rho$, the
  reconstruction is similar to the solution of the wave equation: As
  $\eps\to 0$,
  \begin{equation}\label{eq:intro-simple}
    \hat{\mathcal{Q}}^\rho_0 \hat u_0(k,\tau/\eps^2) 
    - e^{-ic|k|\tau/\eps^2}\hat u_0(k) \to 0\,.
  \end{equation}
\end{theorem}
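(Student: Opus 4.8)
The plan is to establish \eqref{eq:intro-simple} by evaluating the Fourier transform of the reconstruction directly and analysing it with the method of stationary phase in the regime $t = \tau/\eps^2 \to \infty$. First I would unfold Definition \ref{def:reconstr-Q}: since $\hat{\mathcal{Q}}^\rho_0$ is the Fourier transform of the shell operator $\calS$ applied to the evolved profile $J_b\,\calR\,\hat u_0$, and $\calS$ lays a radial profile along every ray $x = Rq$ centred at the radius $R = c\tau/\eps^2 = ct$, one is led to an oscillatory integral of the form
\begin{equation}
  \hat{\mathcal{Q}}^\rho_0\hat u_0(k,\tau/\eps^2)
  = \int_{S^{d-1}}\!\int_0^\infty e^{-iR|k|\,(q\cdot k/|k|)}\, a(R,q;k)\, V_\tau(R-ct;q)\, dR\, dq ,
\end{equation}
with a smooth amplitude $a$ gathering the polar Jacobian and the spreading weight into $R^{(d-1)/2}$, and with $V_\tau := J_b\,\calR\,\hat u_0$ the profile at rescaled time $\tau$. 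The large parameter $ct = c\tau/\eps^2$ sits in the centring of the profile.

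The core step is the stationary-phase expansion in the angular variable. Writing $\hat k := k/|k|$, the phase $-R|k|\,(q\cdot\hat k)$ is stationary on $S^{d-1}$ exactly at $q=\hat k$, with transverse Hessian proportional to $R|k|\sim c t|k|$. Here the hypothesis $|k|>\rho$ is decisive: it keeps the evaluation away from the cone tip $k=0$, where $c|k|$ fails to be smooth and the Hessian degenerates, so the stationary point is non-degenerate and the standard $(d-1)$-dimensional expansion applies. The leading term factorises into (i) the fast phase $e^{-ict|k|}=e^{-ic|k|\tau/\eps^2}$ coming from the centring shift $z=R-ct$, (ii) the amplitude-and-Maslov factor $(2\pi/|k|)^{(d-1)/2}e^{i\pi(d-1)/4}$ from the Gaussian integration transverse to $\hat k$, and (iii) the one-dimensional Fourier content of the profile $V_\tau(\cdot;\hat k)$ at radial frequency $|k|$.

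It then remains to match and to dispose of $J_b$. By construction in Definition \ref{def:reconstr-Q}, the extraction operator $\calR$ is normalised so that the radial Fourier transform of its output equals $C_d\,|k|^{(d-1)/2}\hat u_0(k)$ with $C_d=(2\pi)^{-(d-1)/2}e^{-i\pi(d-1)/4}$, precisely the reciprocal of the factor in (ii); hence (ii) and (iii) combine to $\hat u_0(k)$ and the leading stationary-phase term is exactly $e^{-ic|k|\tau/\eps^2}\hat u_0(k)$. One must still verify that the evolution $J_b$ leaves this term untouched: $J_b$ is of Schrödinger type in the variable $\eta$ transverse to the ray, acting by the multiplier $\exp(-i\,c|\eta|^2\tau/(2|k|\eps^2))$, which is the identity on the zero-transverse-frequency mode $\eta=0$. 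Since evaluating at $k=|k|\hat k$ is exactly the on-axis, $\eta=0$ component selected by the stationary point at $q=\hat k$, the operator $J_b$ fixes the relevant component and the leading term is unchanged.

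Finally I would bound the remainder. It consists of the sub-leading stationary-phase terms, of relative size $O(1/(t|k|))=O(\eps^2)$ and controlled because the smoothness and compact support of $\hat u_0$ render the profile Schwartz in $z$ and smooth in $q$, together with the tail produced by truncating the radial integral at its lower limit $z=-ct$, which is rapidly decaying for the same reason; both tend to $0$ as $\eps\to0$, yielding \eqref{eq:intro-simple}. I expect the genuine obstacle to be the third step, namely reconciling the order-one-in-$\tau$ action of $J_b$ with the required invariance of the leading single-mode term: one has to isolate the conserved zero-transverse-frequency component of the profile evolution and show that the stationary point at $q=\hat k$ selects precisely this component, uniformly as $\eps\to0$. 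The non-degeneracy granted by $|k|>\rho$ and the Schwartz decay of the profile are what make this identification and the attendant remainder estimates go through, while the restriction $d\in\{1,2,3\}$ keeps the exponents $(d-1)/2$ in the elementary range where the one-dimensional profile picture is exact.
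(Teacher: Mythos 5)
Your overall route is the one the paper takes: unfold $\hat{\mathcal{Q}}^\rho_0$ into an oscillatory integral over $S^{d-1}\times\R$, recognize the radial integral as the one-dimensional Fourier transform of the profile evaluated at $\xi=q\cdot k$, apply stationary phase on the sphere with large parameter $N=c|k|\tau/\eps^2$, and observe that the prefactor $(2\pi i)^{-(d-1)/2}|\xi|^{(d-1)/2}$ built into $\calR_\rho$ cancels the Gaussian/Maslov factor. This is exactly Theorem \ref{thm:rec} combined with Lemma \ref{lem:spherical}. There are, however, two genuine gaps. First, the phase $q\mapsto 1-q\cdot k/|k|$ is \emph{not} stationary only at $q=k/|k|$: it has a second non-degenerate critical point at the antipode $q=-k/|k|$, whose contribution survives the $N^{(d-1)/2}$ normalization at order one and merely oscillates (a factor $e^{2iN}$) instead of converging. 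In $d=1$ this is the entire content of the lemma, $A^N_\phi=\phi(1)+e^{2iN}\phi(-1)$. The convergence therefore hinges on the amplitude vanishing on the half-sphere $\{q\cdot k<0\}$, which is guaranteed by $W_\rho(\xi)=0$ for $\xi\le 0$ (the cutoff $\one_{\{\xi>0\}}$ in $\calR$); you never invoke this, and without it the claimed limit is false.

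Second, your ``third step'' rests on a misreading of $J_b$. In the paper, $J_b$ is the multiplier $e^{-ib(\xi)\tau}$ acting on the \emph{radial} Fourier variable $\xi$ of the profile; it is not a Schr\"odinger-type multiplier $\exp(-ic|\eta|^2\tau/(2|k|\eps^2))$ in a transverse frequency $\eta$. At the stationary point one has $\xi=q\cdot k=|k|$, so $J_b$ simply contributes the factor $e^{-ib(|k|)\tau}$ (this is how the general statement \eqref{eq:thm-ptw} reads), and for the present theorem $b=0$, so $J_0=\mathrm{id}$ and the ``genuine obstacle'' you identify is vacuous. The places where work is actually required, and which your sketch glosses over, are the error estimates: the replacement of the Jacobian $r^{d-1}/(ct)^{(d-1)/2}$ by $(ct)^{(d-1)/2}$ leaves, in $d=3$, a term $2z$ whose integral against $V$ is $O(1)$ in absolute value, which is why the paper must keep the second-order expansion $c\tau/\eps^2+2z$ and show separately that the resulting $\partial_\xi\hat V$ term dies for lack of the $N$ amplification; and the truncation of the radial integral at $|z|<c\tau/\eps^2$, which needs the quantitative decay \eqref{eq:growthcondition1} of $V$ in $z$. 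With these points supplied, your argument matches the paper's proof.
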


Our main result is stated in Theorem \ref {thm:mainresultQb} below,
and it treats a much more general situation. It allows to treat weakly
dispersive wave equations such as, e.g., $\partial^2_t u(x,t) -
c^2\Delta u(x,t) + \eps^2 B \Delta^2 u = 0$. We only assume that the
solution can be represented in Fourier-space as $\hat u(k,t) = e^{-
  ic|k| t} e^{-ib(|k|) \eps^2 t} \hat u_0(k)$ for some dispersion
function $b = b(|k|)$.  Our theorem yields that the solution $u$ can
be obtained as described above with a shell-like reconstruction from
profiles. In the case of weakly dispersive wave equations, the profile
equation becomes nontrivial: In the leading order case $b(|k|) = b_3
|k|^3$, we obtain a linearized KdV equation $\partial_\tau V(z,q,\tau)
= b_3\partial^3_z V(z,q,\tau)$ for the evolution of the profile
$V(.,q,\tau)$ in the direction $q\in S^{d-1}$. We note that the factor
in the equation is $\eps^2$, since the equation appears as an
effective equation for a problem with micro-structure with length
scale $\eps$, see \cite {DLS14}.

Let us discuss briefly the complexity of the two problems under
consideration. In order to solve a dispersive wave equation on a time
interval of order $\eps^{-2}$, one has to use a computational spatial
domain of order $(\eps^{-2})^d$, the complexity is of the order
$(\eps^{-2})^{d+1}$. To calculate the approximation by the shell
reconstruction operator, one has to extract, for every direction $q\in
S^{d-1}$, a profile function (which is concentrated on a domain of
order $1$. One has to solve (again, for every $q$) a profile evolution
equation on a time interval of order $1$. In the third step, the
profiles are combined to a shell like solution. In particular, the
complexity of the reconstruction process is independent of $\eps$.

\subsection* {Literature}

It is a classical problem to investigate the long time behavior of
solutions to a wave equation. In fact, most research treats more
difficult problem classes than we treat here. We recall that only
linear wave equations with constant coefficients are investigated here; we
assume that the solution can be described in Fourier space by a
multiplication operator that uses the dispersion relation of the
equation.

One of the more difficult problem classes regards homogenization. In
this context, one is interested in a medium that has a periodic
microstructure and asks for the behavior of solutions after long
times. An important contribution in this area is \cite
{SantosaSymes91}; essentially, the second order wave equation in a
heterogeneous medium can be replaced by a weakly dispersive wave
equation in a homogeneous medium. Rigorous results have been obtained
in \cite {DLS14} and \cite {DLS15}, numerical approaches are discussed
in \cite {AbdulleP16}. The same question in a stochastic medium was
addressed in \cite {benoit:hal-01449353}.

Our analysis can be understood as a continuation and improvement of
\cite {Schweizer-Theil-2017}, where the authors studied the long time
behavior for a lattice wave equation. They derived, on the one hand,
that a weakly dispersive wave equation in a homogeneous medium is a
valid replacement for the lattice wave equation. On the other hand,
\cite {Schweizer-Theil-2017} introduced the shell reconstruction
operator; one result regards the approximate reconstruction of the
solution from profiles that are obtained as solutions of a linearized
KdV equation.

The work at hand studies the shell reconstruction operator on a more
abstract level. We do not apply the results to the discrete wave
equation (even though this is possible); we merely investigate an
arbitrary evolution of initial data in Fourier space, where the
evolution is given by harmonic functions through some dispersion
relation. For very general equations, we show that the shell
reconstruction operator provides an approximation of the solution.

We improve the results of \cite{Schweizer-Theil-2017} in two ways. On
the one hand, we can now treat the dimension $d=3$. On the other hand,
we can decouple the effect of dispersion from the analysis of the
shell operator. This makes the analysis more flexible.

An important tool for our method is a stationary phase method. We show
the necessary result in Section \ref {sec.stat-phase}. It regards the
convergence of an oscillatory integral on the sphere. For other
stationary phase results we refer to the book \cite {Wong2001}.

In \cite {MR2060593}, dispersive limit equations are derived for a
linear wave equation in the context of homogenization.  For the long
time behavior of waves in a nonlinear system we mention \cite
{MR3733381}. The monograph \cite {Nedelec-book} contains many
representation formulas for solutions of equations related to the wave
equation.

\section{The reconstruction operator}
\label{sec:recoperator}

We now introduce the three operators that were announced in the
introduction. The concatenation of these operators provides the
reconstruction operator $\calQ$. In the construction, we have to
switch several times between the physical space and the Fourier space.

On the space $X := L^2(\R^d; \C)$ we use the standard
$d$-dimensional Fourier transform $\mathcal{F}_d:X\to X$,
\begin{equation}
  (\calF_d u_0)(k):=\hat u_0(k):=\int_{\R^d} u_0(x)e^{-ik\cdot x}\,dx\,.
\end{equation}
The inverse Fourier transform is $\mathcal{F}_d^{-1}:X\to X$,
\begin{equation*}
  (\calF_d^{-1} \hat u_0)(x)
  :=\frac{1}{(2\pi)^d}\int_{\R^d}e^{ix\cdot k}\hat u_0(k)\,dk\,.
\end{equation*}
By Parseval's identity, $\frac{1}{(2\pi)^{d/2}}\left\|\mathcal{F}_d
  u_0\right\|_{L^2(\R^d)}=\left\|u_0\right\|_{L^2(\R^d)}$.

\medskip
The first operator of our construction has the character of a
restriction: functions on $\R^d$ are mapped to a family of functions
on $\R$ (parametrized by a directional variable $q$).  We use the
space
\begin{equation}
\label{eq:defXs}
X_S:=L^2(\R\times S^{d-1} ; \C)\,, 
\end{equation}
where $S^{d-1}\subset \R^d$ denotes the $(d-1)$-dimensional sphere.

\begin{definition}[The operator $\mathcal{R}$]
  \label{def:restrictionoperator}
  The linear operator $\mathcal{R}$ maps functions on $\R^d$ to
  one-dimensional profiles. We define $\mathcal{R}:X\to X_S$ through
  \begin{equation}\label{eq:restrictionoperator}
    \mathcal{R}\hat u_0(\xi,q)
    := \left(\frac{|\xi|}{2\pi i}\right)^{(d-1)/2}\one_{\{\xi>0\}}\ 
    \hat u_0(|\xi| q)\,. 
  \end{equation}
\end{definition}
It is straightforward to see that
\begin{equation}
\label{eq:ParsevalR}
\|\mathcal{R}\hat u_0\|_{X_S}=(2\pi)^{-(d-1)/2}\|\hat u_0\|_X\,.
\end{equation}
Indeed,
\begin{align*}
  \|\mathcal{R}\hat u_0(\xi,q)\|^2_{X_S}
  &=\int_\R\int_{S^{d-1}}|\mathcal{R}\hat u_0(\xi,q)|^2\,dS(q)d\xi\\
  &=\frac{1}{(2\pi)^{(d-1)}}\int_0^\infty\int_{S^{d-1}}|\xi|^{d-1}
  \left|\hat u_0( |\xi| q)\right|^2\, dS(q)d\xi\\
  &=\frac{1}{(2\pi)^{(d-1)}}\int_{\R^d}|\hat u_0(x)|^2\,dx
  =\frac{1}{(2\pi)^{(d-1)}}\|\hat u_0\|^2_X\,. 
\end{align*}
In order to obtain our results we have to regularize the function
\begin{equation*}
  W(\xi) := |\xi|^{(d-1)/2}\one_{\{\xi>0\}}\,.
\end{equation*}
For a small parameter $\rho>0$ and $d\in\{1,2,3\}$ we consider
functions $W_\rho$ with the following properties: $W_\rho\in
C^{d-1}(\R;\R)$ and
\begin{equation}\label{eq:W-rho}
  \quad W_\rho(\xi)=0\,\,\forall\xi\leq 0,\quad 
  W_\rho(\xi)=|\xi|^{(d-1)/2}\,\,\forall\xi\geq \rho,\quad 
  0\leq W_\rho(\xi)\leq |\xi|^{(d-1)/2}\,\,\forall\xi\geq 0\,. 
\end{equation}
We use the smooth functions $W_\rho$ to define regularized versions of
the operator $\mathcal{R}$.

\begin{definition}[The operator $\mathcal{R}_\rho$]
  \label{def:restrictionoperatorregularized}
  Let $\rho>0$ and let $W_\rho$ be as in \eqref {eq:W-rho}.  The
  linear operator $\mathcal{R}_\rho:X\to X_S$ is defined through
  \begin{equation}\label{eq:restrictionoperator-rho}
    \mathcal{R}_\rho\hat u_0(\xi,q)
    := \left(\frac{1}{2\pi i}\right)^{(d-1)/2} 
    W_\rho(\xi)\hat u_0(|\xi| q)\,. 
  \end{equation}
\end{definition}
As in \eqref{eq:ParsevalR}, by $0\leq W_\rho(\xi)\leq
|\xi|^{(d-1)/2}$, the regularized operators satisfy the estimate
$\|\mathcal{R}_\rho\hat u_0\|_{X_S}\leq(2\pi)^{-(d-1)/2} \|\hat
u_0\|_X$.
 
\medskip
The next operator associates to an initial profile (in Fourier space)
an evolution of profiles (in Fourier space).

\begin{definition}[The operator $J_b$]
  Let $b:\R \to \R$ be a function. The linear operator $J_b$ maps the
  (Fourier transform of) a profile to an evolution of profiles.  We
  define the linear operator $J_b:X_S\to L^\infty(0,\infty;X_S)$
  through
  \begin{equation}
    \label{eq:defJ}
    (J_b \hat V_0)(\xi,q,\tau):=e^{-ib(\xi)\tau}\hat V_0(\xi,q)\,. 
  \end{equation}
\end{definition} 

We emphasize that the time variable is $\tau$ and not $t$, which means
that the evolution of profiles is studied in a new time scale. We will
use $\tau = \eps^2 t$, where $\eps>0$ is a small scaling variable. In
the following, two choices of $b$ will be relevant.

\smallskip {\bf 1)} For $b(\xi)=0$ one has $J_b \hat V_0(\xi,q,\tau) =
J_0 \hat V_0(\xi,q,\tau) = \hat V_0(\xi,q)$. In this case, the time
evolution of the profile is trivial, the profile remains unchanged. In
physical space, this operator describes the trivial evolution equation
$\partial_\tau V(z,q,\tau) = 0$ with initial datum $V(z,q,0) =
(\mathcal{F}_1^{-1} \hat V_0(\cdot,q))(z)$.

\smallskip {\bf 2)} For $b(\xi)=b_3\xi^3$ with $b_3\in \R$ one has
$(J_b \hat V_0)(\xi,q,\tau)=e^{-ib_3\xi^3\tau}\hat V_0(\xi,q)$. With
this choice, the profile evolution in physical space is given by the
linearized KdV-equation $\partial_\tau V(z,q,\tau)=b_3\partial^3_z
V(z,q,\tau)$ with initial data $V(z,q,0)=(\mathcal{F}_1^{-1}\hat
V_0(\cdot,q))(z)$.

\smallskip We note that, independently of the choice of the function
$b$, for every time instance $\tau$, the operator $J_b|_\tau$ is an
isometry because of $|e^{-ib(\xi)\tau}|=1$.

\medskip We finally introduce the shell reconstruction operator. The
reconstruction was also used in \cite {Schweizer-Theil-2017}; it maps
a family of profiles to a ring-like ($d=2$) or shell like ($d=3$)
function on $\R^d$. An important ingredient is the rescaling factor
$(ct)^{-(d-1)/2}$, which has the effect that $L^2$-norms of
reconstructed functions are bounded.

\begin{definition}[The operator $\mathcal{S}$] 
  \label{def:shelloperator}
  We introduce an operator $\mathcal{S}$ that maps profiles to
  functions on $\R^d$.  For a small parameter $\eps>0$ we define the
  linear operator $\mathcal{S}:L^\infty(0,\infty;X_S)\to
  L^\infty(0,\infty;X)$ through
  \begin{equation}
    (\mathcal{S}V)(x,t)
    := \frac{1}{(ct)^{(d-1)/2}}  \one_{\{|x|<2ct\}}\ 
    V\left(|x|-ct, \frac{x}{|x|},\eps^2t\right)\,.
  \end{equation}
\end{definition}

The operator $\mathcal{S}$ constructs, starting from a slowly varying
function $V$, a shell-like solution. The main pulse of the shell-like
solution is near $|x| = ct$ and moves with constant speed $c$; its
profile is given by $V$.  The construction depends on the small
parameter $\eps$, which we suppress in most calculations for the sake
of readability.

\begin{lemma}
  The operator $\mathcal{S}:L^\infty(0,\infty;X_S)\to
  L^\infty(0,\infty;X)$ is bounded. It satisfies, for every $V\in
  L^\infty(0,\infty;X_S)$
  \begin{equation*}
    \|\mathcal SV\|_{L^\infty(0,\infty;X)}
    \leq 2^{(d-1)/2}\, \|V\|_{L^\infty(0,\infty;X_S)}\,.
  \end{equation*}
\end{lemma}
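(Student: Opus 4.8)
The plan is to compute the $L^2(\R^d)$-norm of $(\mathcal{S}V)(\cdot,t)$ at a fixed time $t>0$ directly and show that it is controlled by the $X_S$-norm of $V(\cdot,\cdot,\eps^2 t)$, uniformly in $t$. First I would fix $t>0$, introduce polar coordinates $x = r q$ with $r = |x| \geq 0$ and $q = x/|x| \in S^{d-1}$, so that $dx = r^{d-1}\,dr\,dS(q)$. The indicator $\one_{\{|x|<2ct\}}$ then restricts the radial integration to $r \in (0, 2ct)$, and the definition gives
\begin{equation*}
  \|(\mathcal{S}V)(\cdot,t)\|_{X}^2
  = \frac{1}{(ct)^{d-1}} \int_{S^{d-1}} \int_0^{2ct}
    \left| V\!\left(r-ct, q, \eps^2 t\right)\right|^2 r^{d-1}\,dr\,dS(q)\,.
\end{equation*}

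The key step is the change of variables $\xi = r - ct$ in the inner integral, which has Jacobian $1$ and maps $r \in (0,2ct)$ to $\xi \in (-ct, ct)$, with $r = \xi + ct$. This turns the weight $r^{d-1}$ into $(\xi+ct)^{d-1}$, and on the range of integration we have $0 < r < 2ct$, hence $r^{d-1} < (2ct)^{d-1} = 2^{d-1}(ct)^{d-1}$. Bounding the weight pointwise by this maximum and enlarging the $\xi$-domain from $(-ct,ct)$ to all of $\R$, I obtain
\begin{equation*}
  \|(\mathcal{S}V)(\cdot,t)\|_{X}^2
  \leq \frac{2^{d-1}(ct)^{d-1}}{(ct)^{d-1}}
    \int_{S^{d-1}} \int_{\R} \left| V\!\left(\xi, q, \eps^2 t\right)\right|^2 d\xi\,dS(q)
  = 2^{d-1}\, \|V(\cdot,\cdot,\eps^2 t)\|_{X_S}^2\,.
\end{equation*}
The prefactor $(ct)^{-(d-1)}$ in the definition of $\mathcal{S}$ cancels exactly against the maximal value of the radial weight, which is precisely why the rescaling factor $(ct)^{-(d-1)/2}$ was built into the operator.

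Taking square roots yields $\|(\mathcal{S}V)(\cdot,t)\|_{X} \leq 2^{(d-1)/2} \|V(\cdot,\cdot,\eps^2 t)\|_{X_S}$. Since $\|V(\cdot,\cdot,\eps^2 t)\|_{X_S} \leq \|V\|_{L^\infty(0,\infty;X_S)}$ for every $t>0$, and the resulting bound is uniform in $t$, passing to the supremum over $t \in (0,\infty)$ on the left-hand side gives the claimed estimate and in particular the boundedness of $\mathcal{S}$. I expect no serious obstacle here: the only point requiring a little care is the handling of the weight $r^{d-1}$ after the shift, where the crude bound $r^{d-1} \leq (2ct)^{d-1}$ on the integration domain suffices and is in fact sharp enough to produce exactly the constant $2^{(d-1)/2}$. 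One should also note the measurability of $(x,t)\mapsto (\mathcal{S}V)(x,t)$, which follows from that of $V$ together with the continuity of the coordinate maps, but this is routine.
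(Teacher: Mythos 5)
Your proof is correct and follows essentially the same route as the paper: polar coordinates, the pointwise bound $r^{d-1}\leq (2ct)^{d-1}$ on the integration region $(0,2ct)$ cancelling the prefactor $(ct)^{-(d-1)}$, and then enlarging the domain to bound by $\|V(\cdot,\cdot,\eps^2t)\|_{X_S}^2$ uniformly in $t$. No issues.
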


\begin{proof}
  For every $t\in \left(0, \infty\right)$ one has
  \begin{align*}
    \|\mathcal{S}V(\cdot,t)\|^2_{X}&=\int_{\R^d}\frac{1}{(ct)^{d-1}}
    \left|V\left(|x|-ct,\frac{x}{|x|},\eps^2t\right)\right|^2\one_{\{|x|<2ct\}}\,dx\\
    &=\int_0^{2ct}\int_{S^{d-1}}\frac{r^{d-1}}{(ct)^{d-1}}
    \left|V\left(r-ct,q,\eps^2t\right)\right|^2\,dS(q)\,dr\\
    &\leq 2^{d-1}\int_0^{2ct}\int_{S^{d-1}}\left|V\left(r-ct,q,\eps^2t\right)\right|^2\,dS(q)\,dr\\
    &\leq 2^{d-1}\|V(\cdot,\cdot, \eps^2t)\|^2_{X_S}\leq
    2^{d-1}\|V\|^2_{L^\infty(0,\infty;X_S)}\,,
  \end{align*}
  which provides the claim.
\end{proof}

\medskip With the above operators at hand we are now in the position
to introduce our main object of interest, the reconstruction operator
$\mathcal{Q}_b$. It can be described in words as the following
concatenation: From a Fourier transform $\hat u_0$ of initial values,
profile initial data are extracted with the operator $\calR$, then the
profile evolution $J_b$ is applied and the profile is interpreted in
physical space with the inverse Fourier transform
$\mathcal{F}_1^{-1}$. Finally, the shell operator $\calS$ is applied
in order to reconstruct an evolution of functions on $\R^d$.

\begin{definition}[The reconstruction operator $Q_b$]\label{def:reconstr-Q}
  We define the linear reconstruction operator $\mathcal{Q}_b:X\to
  L^\infty(0,\infty;X)$ through
  \begin{equation}
    \mathcal{Q}_b = \mathcal{S}\circ 
    \mathcal{F}_1^{-1}\circ J_b\circ \mathcal{R}\,.
  \end{equation} 
  The operator in Fourier space is denoted as
  $\hat{\mathcal{Q}}_b:=\mathcal{F}_d\circ \mathcal{Q}_b$. For
  $\rho>0$ we define the regularized operators $\mathcal{Q}^\rho_b$
  and $\hat{\mathcal{Q}}_b^\rho$ by replacing $\mathcal{R}$ with
  $\mathcal{R}_\rho$.
\end{definition}

We can now state our main result, which compares two objects. On the
one hand, the solution of a (dispersive) wave equation, which is given
by a multiplication with $e^{-i \left(c|k|/\eps^2+b(|k|)\right)\tau}$
in Fourier space. On the other hand, the reconstruction
$\hat{\mathcal{Q}}_b \hat u_0$. The result is that the two operators
coincide in the limit $\eps\to 0$.

\begin{theorem} [Approximation result for reconstructions]
  \label{thm:mainresultQb}
  Let $\hat u_0\in X$ be continuous initial data with compact support,
  let the dimension be $d\in \{1,2,3\}$, and let $b:\R\to\R$ be a
  dispersion function. Let $\rho$ and $W_\rho$ be as in \eqref
  {eq:W-rho}.  We assume that the regularized profile evolution
  $V^\rho:=(\mathcal{F}_1^{-1}\circ J_b\circ \mathcal{R}_\rho)\hat
  u_0$ satisfies the smoothness and decay properties of Assumption
  \ref{ass:growthcondition}. Then, for every $\tau>0$ and every $k\in
  \R^d$ with $|k|>\rho$, there holds
  \begin{equation}\label{eq:thm-ptw}
    (\hat{\mathcal{Q}}_b^\rho \hat u_0)(k,\tau/\eps^2)e^{ic|k|\tau/\eps^2}
    \to  e^{-ib(|k|)\tau}\hat u_0(k)\,.
  \end{equation}

  Moreover, for every $\tau>0$, there holds weak convergence for the
  non-regularized reconstruction operators,
  \begin{equation}\label{eq:thm-weak}
    (\hat{\mathcal{Q}}_b \hat u_0)(k,\tau/\eps^2) e^{ic|k|\tau/\eps^2}
    \weakto e^{-ib(|k|)\tau}\hat u_0(k)
  \end{equation}
  weakly in $L^2(\R^d)$ as functions in $k\in \R^d$ as $\eps\to 0$.
\end{theorem}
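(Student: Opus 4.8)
The plan is to write $\hat{\mathcal Q}_b^\rho\hat u_0$ as an oscillatory integral over $S^{d-1}$ and to extract \eqref{eq:thm-ptw} by a stationary phase argument. First I unravel the concatenation of Definition \ref{def:reconstr-Q}. By \eqref{eq:defJ} and \eqref{eq:restrictionoperator-rho}, the profile $V^\rho=\mathcal F_1^{-1}J_b\mathcal R_\rho\hat u_0$ satisfies $\mathcal F_1V^\rho(\eta,q,\tau)=e^{-ib(\eta)\tau}(2\pi i)^{-(d-1)/2}W_\rho(\eta)\hat u_0(|\eta|q)$. Applying the shell operator $\mathcal S$ of Definition \ref{def:shelloperator} and then $\mathcal F_d$, passing to polar coordinates $x=rq$ and substituting $s=r-ct$ with $t=\tau/\eps^2$, and pulling out the factor $e^{-ict\,k\cdot q}$, I obtain
\begin{equation*}
\hat{\mathcal Q}_b^\rho\hat u_0(k,t)=\frac{1}{(ct)^{(d-1)/2}}\int_{S^{d-1}}e^{-ict\,k\cdot q}\left(\int_{-ct}^{ct}e^{-is\,k\cdot q}\,V^\rho(s,q,\tau)\,(s+ct)^{d-1}\,ds\right)dS(q)\,.
\end{equation*}

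Next I handle the growing weight and the truncation. Expanding $(s+ct)^{d-1}$ binomially, the $j$-th term carries a factor $(ct)^{d-1-j}$ and an inner integral $\int s^j e^{-is\,k\cdot q}V^\rho\,ds$, which up to a power of $i$ is $\partial_\eta^j$ of $\mathcal F_1V^\rho$ at $\eta=k\cdot q$. The decay in Assumption \ref{ass:growthcondition} guarantees that these integrals converge and that the truncation $(-ct,ct)$ may be replaced by $\R$ with an error vanishing as $t\to\infty$. Recalling the formula for $\mathcal F_1V^\rho$, the leading ($j=0$) term contributes the amplitude $(2\pi i)^{-(d-1)/2}e^{-ib(k\cdot q)\tau}W_\rho(k\cdot q)\hat u_0(|k\cdot q|\,q)$ multiplied by $(ct)^{d-1}$.

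It remains to apply stationary phase to the $q$-integral, whose phase $-ct\,k\cdot q=-ct|k|\,(\omega\cdot q)$, $\omega:=k/|k|$, oscillates as $t=\tau/\eps^2\to\infty$. The critical points of $q\mapsto\omega\cdot q$ on $S^{d-1}$ are $q=\pm\omega$; at $q=-\omega$ the amplitude vanishes since $W_\rho(-|k|)=0$ by \eqref{eq:W-rho}, so only $q=\omega$ contributes, where $q\mapsto-\omega\cdot q$ has a nondegenerate minimum with Hessian $\mathrm{Id}_{d-1}$. Invoking the stationary phase result of Section \ref{sec.stat-phase} (whose regularity hypotheses hold since $W_\rho\in C^{d-1}$ and by Assumption \ref{ass:growthcondition}), the $j=0$ term equals $(ct)^{(d-1)/2}\,(2\pi/(ct|k|))^{(d-1)/2}$ times the Maslov phase $e^{i\pi(d-1)/4}$ and the amplitude at $q=\omega$, while the terms $j\ge1$ are $O((ct)^{-j})$. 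Using $|k|>\rho$, so that $W_\rho(|k|)=|k|^{(d-1)/2}$ and $\hat u_0(|k|\omega)=\hat u_0(k)$, all powers of $ct$, the factors $(2\pi)^{\pm(d-1)/2}$ and $|k|^{\pm(d-1)/2}$ cancel, and $i^{-(d-1)/2}e^{i\pi(d-1)/4}=1$, leaving $e^{-ic|k|t}e^{-ib(|k|)\tau}\hat u_0(k)$; multiplying by $e^{ic|k|\tau/\eps^2}=e^{ic|k|t}$ gives \eqref{eq:thm-ptw}. (For $d=1$ the ``sphere'' $S^0=\{\pm1\}$ degenerates and the identity follows by a direct computation, as the weight $(s+ct)^{d-1}\equiv1$ and no genuine oscillatory integral appears.) I expect this step to be the main obstacle: making the stationary phase asymptotics rigorous in the presence of the growing weight $(s+ct)^{d-1}$ and the $t$-dependent truncation, which is precisely why the decay and smoothness of Assumption \ref{ass:growthcondition} and the dedicated estimate of Section \ref{sec.stat-phase} are needed.

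For the weak convergence \eqref{eq:thm-weak} I argue by uniform boundedness and density. Since $\mathcal R$ satisfies \eqref{eq:ParsevalR}, the maps $J_b|_\tau$ and $\mathcal F_1^{\pm1},\mathcal F_d$ are isometries up to constants, and $\mathcal S$ is bounded by the preceding Lemma, the reconstruction obeys $\|(\hat{\mathcal Q}_b\hat u_0)(\cdot,\tau/\eps^2)\|_X\le C\|\hat u_0\|_X$ uniformly in $\eps$ and $\rho$. Running the computation above without assuming $|k|>\rho$ shows that $g_\eps^\rho(k):=(\hat{\mathcal Q}_b^\rho\hat u_0)(k,\tau/\eps^2)e^{ic|k|\tau/\eps^2}$ converges pointwise a.e.\ to $g^\rho(k):=|k|^{-(d-1)/2}W_\rho(|k|)\,e^{-ib(|k|)\tau}\hat u_0(k)$; the uniform $L^2$-bound together with a.e.\ convergence yields $g_\eps^\rho\weakto g^\rho$ in $L^2(\R^d)$ for each fixed $\rho$. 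Finally, the difference between the regularized and the genuine operator is governed by $W-W_\rho$, supported in $(0,\rho)$, so that both $\|g_\eps-g_\eps^\rho\|_X$ and $\|g^\rho-g\|_X$ with $g(k):=e^{-ib(|k|)\tau}\hat u_0(k)$ are bounded, up to a constant, by $(\int_{|k|<\rho}|\hat u_0|^2\,dk)^{1/2}$, uniformly in $\eps$. A standard approximation argument — test against $\phi\in X$, choose $\rho$ small to make the two difference terms small, then send $\eps\to0$ using the fixed-$\rho$ weak convergence — upgrades this to $g_\eps\weakto g$, which is \eqref{eq:thm-weak}.
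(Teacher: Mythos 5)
Your argument is correct and follows essentially the same route as the paper: the paper isolates your oscillatory-integral computation as a separate statement (Theorem \ref{thm:rec}, proved via polar coordinates, the recentering $r=ct+z$, the expansion of the Jacobian weight with explicit error terms $G^\eps_i$, and the sphere stationary-phase Lemma \ref{lem:spherical}, where the second critical point is excluded exactly as you do via the support of $W_\rho$ on $\{\xi>0\}$), and then obtains the weak convergence just as you propose, from the uniform $L^2$ bound, the fixed-$\rho$ pointwise limit, and the smallness of $\mathcal{R}-\mathcal{R}_\rho$ as $\rho\to 0$. The differences are purely organizational, so nothing further is needed.
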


As outlined in Remark \ref{rem:regularityassumptions} below, the
assumptions of Theorem \ref{thm:mainresultQb} are satisfied as soon as
$\hat u_0$ and $b$ are sufficiently smooth.  

The fundamental approximation result of this article is presented in
the next section as Theorem \ref{thm:rec}. Our main theorem, Theorem
\ref {thm:mainresultQb} above, can be regarded as a corollary
thereof. We present its proof here, using Theorem \ref{thm:rec}.

\begin{proof}[Proof of Theorem \ref {thm:mainresultQb}] We have to
  show a pointwise and a weak convergence.

  \smallskip {\em Step 1: Pointwise convergence of regularized
    profiles.}  We set $\hat V^\rho := (J_b\circ \mathcal{R}_\rho) (\hat
  u_0)$ and apply Theorem \ref{thm:rec} to these regularized profile
  evolutions $\hat V^\rho = \hat V^\rho(\xi,q,\tau)$. The function
  $V^\rho := \mathcal{F}_1^{-1} \hat V^\rho$ satisfies Assumption
  \ref{ass:growthcondition} by the assumptions of Theorem \ref
  {thm:mainresultQb}.  Theorem \ref{thm:rec} provides, for fixed
  $k\neq 0$,
  \begin{equation}\label{eq:cor-of-33}
    (\mathcal{F}_d\circ\mathcal{S})(V^\rho)(k,\tau/\eps^2)e^{i c |k| \tau/\eps^2}
    \to \left(\frac{|k|}{2\pi i}\right)^{-(d-1)/2}
    \hat V^\rho\left(|k|,\frac{k}{|k|},\tau\right)
  \end{equation}
  as $\eps\to 0$. It remains to calculate the two sides of this relation.

  The term $(\mathcal{F}_d\circ\mathcal{S})(V^\rho)$ on the left hand side of \eqref {eq:cor-of-33} is
  $(\mathcal{F}_d\circ\mathcal{S})(V^\rho) =
  (\mathcal{F}_d\circ\mathcal{S}\circ \mathcal{F}_1^{-1} \circ J_b\circ
  \mathcal{R}_\rho) (\hat u_0) = \hat{\mathcal{Q}}^\rho_b\hat u_0$. We
  see that the left hand side in \eqref {eq:cor-of-33} coincides with the left hand side in \eqref
  {eq:thm-ptw}.

  For $k\neq 0$, we calculate for the right hand side of \eqref
  {eq:cor-of-33}, using $\hat V_0^\rho := \mathcal{R}_\rho \hat u_0$,
  \begin{align*}
    &\left(\frac{|k|}{2\pi i}\right)^{-(d-1)/2}
    \hat V^\rho\left(\xi=|k|,q=\frac{k}{|k|},\tau\right)\\
    &\qquad = \left(\frac{|k|}{2\pi i}\right)^{-(d-1)/2}
    e^{-ib(|k|)\tau} \hat V_0^\rho
    \left(\xi=|k|,q=\frac{k}{|k|}\right)\\
    & \qquad =
    \begin{cases}
      e^{-ib(|k|)\tau}|k|^{-(d-1)/2} W_\rho(|k|)\hat u_0(k)
      \quad & \text{for }|k|< \rho\\
      e^{-ib(|k|)\tau}\hat u_0(k)
      \quad & \text{for }|k|\geq \rho\,.
    \end{cases}
  \end{align*}
  We have used that $W_\rho(|k|) = |k|^{(d-1)/2}$ for $|k|>\rho$. For
  $|k|\ge\rho$ the right hand side of \eqref {eq:cor-of-33} coincides
  with the right hand side of \eqref {eq:thm-ptw}. This provides the
  pointwise convergence.

  \smallskip {\em Step 2: Weak convergence.}  The operators
  $\hat{\mathcal{Q}}_b$ are bounded, uniformly in $\eps>0$. Therefore,
  the left hand side of \eqref {eq:thm-weak} is bounded in
  $L^2(\R^d)$, for every $\tau>0$. Upon choosing a subsequence
  $\eps\to 0$, for some limit function $L_\tau : \R^d\to \C$, $L_\tau
  = L_\tau(k)$, we can assume
  \begin{equation}
    \label{eq:some-limit}
    (\hat{\mathcal{Q}}_b \hat u_0)(\cdot,\tau/\eps^2) 
    e^{ic|\cdot|\tau/\eps^2} \weakto L_\tau
  \end{equation}
  weakly in $L^2(\R^d)$.  It remains to identify the limit $L_\tau$ as $e^{-ib(|\cdot|)\tau}\hat u_0$.
  The pointwise convergence of Step 1 implies that, for every $\rho>0$
  and every $\tau>0$,
  \begin{equation}
    \label{eq:weakconvrestricted}
    \left[ \hat{\mathcal{Q}}^\rho_b  \hat u_0(k,\tau/\eps^2)e^{ic|k|\tau/\eps^2} -  e^{-ib(|k|)\tau}\hat u_0(k)\right]\,
    \one_{\{|k|\geq \rho\}} \weakto 0\,.
  \end{equation}

  \smallskip {\em Identification of $L_\tau$.}  Let $f\in
  C_c^\infty(\R^d)$ be a smooth test function. We calculate
  \begin{align*}
    &\int_{\R^d}\left[(\hat{\mathcal{Q}}_b \hat u_0)(k,\tau/\eps^2)e^{ic|k|\tau/\eps^2}-e^{-ib(|k|)\tau}\hat u_0(k)\right]f(k)\,dk\\
    &\quad =\int_{\R^d}\left(\left(\hat{\mathcal{Q}}_b-\hat{\mathcal{Q}}^\rho_b\right)
      \hat u_0\right)(k,\tau/\eps^2)e^{ic|k|\tau/\eps^2}f(k)\,dk\displaybreak[2]\\
    &\qquad +\int_{\R^d}(\hat{\mathcal{Q}}^\rho_b\hat u_0)(k,\tau/\eps^2)
    (1-\one_{\{|k|\geq\rho\}})e^{ic|k|\tau/\eps^2}f(k)\,dk\\
    &\qquad + \int_{\R^d}\left[\left(\hat{\mathcal{Q}}^\rho_b\hat u_0\right)(k,\tau/\eps^2)
      e^{ic|k|\tau/\eps^2}-e^{-ib(|k|)\tau}\hat u_0(k)\right]\one_{\{|k|\geq \rho\}}f(k)\,dk\\
    &\qquad +\int_{\R^d}e^{-ib(|k|)\tau}\hat u_0(k)(\one_{\{|k|\geq\rho\}}-1)f(k)\,dk 
    \displaybreak[2]\\
    &\quad =:I_{\eps,\rho} + II_{\eps,\rho} + III_{\eps,\rho} +
    IV_{\eps,\rho}\,.
  \end{align*}

  Regarding the error term $I_{\eps,\rho}$, we use the fact that the
  operator $\mathcal{S}\circ \mathcal{F}_1^{-1}\circ J_b$ is bounded:
  \begin{align*}
    &\left\|\left(\left(\hat{\mathcal{Q}}_b-\hat{\mathcal{Q}}^\rho_b\right)
        \hat u_0\right)(\cdot,\tau/\eps^2)e^{ic|\cdot|\tau/\eps^2}\right\|_{L^2(\R^d)}\\
    &\qquad =\|\left(\left(\mathcal{S}\circ \mathcal{F}_1^{-1}\circ J_b
        \circ(\mathcal{R}-\mathcal{R_\rho})\right)\hat u_0\right)(\cdot,\tau/\eps^2)\|_{L^2(\R^d)}\\
    &\qquad \leq C\|(\mathcal{R}-\mathcal{R_\rho})\hat u_0\|_{X_S}\leq
    \tilde C\|W-W_\rho\|_{L^2(\R)}\to 0
  \end{align*}
  as $\rho\to 0$.  In the last step we have used that $\hat u_0$ is
  bounded. This allows to calculate $I_{\eps,\rho}$ in the limit $\rho\to 0$,
  \begin{align*}
    |I_{\eps,\rho}|
    &\leq \left\|\left(\left(\hat{\mathcal{Q}}_b-\hat{\mathcal{Q}}^\rho_b\right)
        \hat u_0\right)(\cdot,\tau/\eps^2)e^{ic|\cdot|\tau/\eps^2}\right\|_{L^2(\R^d)}
    \|f\|_{L^2(\R^d)}\\
    &\leq \tilde C\|W-W_\rho\|_{L^2(\R)}\|f\|_{L^2(\R^d)}\to 0\,.
  \end{align*}

  For the second error term we calculate
  \begin{align*}
    |II_{\eps,\rho}|
    &\leq\int_{\R^d}\left|(\hat{\mathcal{Q}}^\rho_b\hat u_0)(k,\tau/\eps^2)\right|\,
    \one_{\{|k|<\rho\}}|f(k)|\,dk\\
    &\leq \|f\|_\infty\|(\hat{\mathcal{Q}}^\rho_b\hat u_0)(\cdot,\tau/\eps^2)\|_{L^2(\R^d)}\,
    |\{|k|<\rho\}|^{1/2}\\
    &\leq C\|f\|_\infty\|\hat u_0\|_{L^2(\R^d)}\rho^{d/2}\,,
  \end{align*}
  where we have used that the linear operators
  $\hat{\mathcal{Q}}^\rho_b$ are bounded, independent of $\rho$.  

  For the third error term $III_{\eps,\rho}$ we exploit, for $\rho>0$
  fixed, the weak convergence \eqref{eq:weakconvrestricted}.  Finally,
  $IV_{\eps,\rho}$ is estimated by
  \begin{align*}
    |IV_{\eps,\rho}|
    \leq \int_{\R^d}|\hat u_0(k)|\one_{\{|k|<\rho\}}|f(k)|\,dk
    \leq C \|\hat u_0\|_{L^2(\R^d)}\, \rho^{d/2}\|f\|_\infty\,.
  \end{align*}

  In order to conclude the identification of the weak limit $L_\tau$,
  we first choose $\rho>0$ small such that
  $I_{\eps,\rho},II_{\eps,\rho}$ and $IV_{\eps,\rho}$ are
  small. Afterwards, we choose $\eps>0$ to achieve smallness in
  $III_{\eps,\rho}$. We find
  \begin{equation*}
    \int_{\R^d}(\hat{\mathcal{Q}}_b \hat u_0)(k,\tau/\eps^2)
    e^{ic|k|\tau/\eps^2}f(k)\,dk\to \int_{\R^d}e^{-ib(|k|)\tau}\hat u_0(k)f(k)\,dk
  \end{equation*}
  as $\eps\to 0$. Since $f\in C_c^\infty(\R^d)$ was arbitrary, we
  conclude
  \begin{equation*}
    L_\tau(k) = e^{-ib(|k|)\tau} \hat u_0(k)\,.
  \end{equation*}
  This shows \eqref {eq:thm-weak} and concludes the proof.
\end{proof}

\paragraph{Interpretation.}

Two choices of the function $b$ are of particular interest.

\smallskip {\bf 1)} $b(\xi)=0$ for all $\xi\in \R$. We recall that, by
our assumption on the initial data, the solution of the linear wave
equation is given in Fourier space by
\begin{equation}
  \label{eq:u-eps-Fourier-1}
  \hat u(k,\tau/\eps^2) = e^{- i c |k| \tau/\eps^2} \hat u_0(k)\,.
\end{equation}
Theorem \ref{thm:mainresultQb} implies that, in the limit $\eps\to 0$,
the solution $\hat u$ is close to the function $\hat{\mathcal{Q}}_b
\hat u_0$. This means that the ring solution with profile function $V
= \mathcal{F}_1^{-1}\circ J_b\circ\mathcal{R}\hat u_0$ is a good
approximation of $u$. The pointwise convergence \eqref {eq:thm-ptw}
implies Theorem \ref {thm:simple-version}.

\smallskip {\bf 2)} $b(\xi)=b_3\xi^3$ for all $\xi\in \R$. The weakly
dispersive equation
\begin{equation}
  \label{eq:weaklydispersive}
  \partial^2_t u(x,t)-c^2\Delta_x u(x,t)+ \eps^2d_0\Delta^2_x u(x,t) = 0
\end{equation}
with $d_0>0$ is an effective model to describe waves in heterogeneous
media or in discrete media, see \cite {DLS14} and \cite
{Schweizer-Theil-2017}. The Fourier transform of $u$ satisfies
\begin{equation}
  \partial_t^2\hat u(k,t) + c^2|k|^2\hat u(k,t) + \eps^2 d_0|k|^4\hat u(k,t)=0\,.
\end{equation}
With appropriate initial data, the solution to \eqref
{eq:weaklydispersive} is given in Fourier space by
\begin{equation}
  \label{eq:u-eps-Fourier-2}
  \hat u(k,t) = e^{- i \sqrt{c^2|k|^2+\eps^2d_0|k|^4}\, t} \hat u_0(k)\,.
\end{equation}
Expanding the square route in a Taylor series and considering large
times $t=\tau/\eps^2$ we find that
\begin{align*}
  \sqrt{c^2|k|^2+\eps^2d_0|k|^4}\tau/\eps^2
  &=\left(\sqrt{c^2|k|^2} + \frac{\eps^2d_0|k|^4}{2\sqrt{c^2|k|^2}}\right)\tau/\eps^2 
  + O(\eps^2)\\
  &=c|k|\tau/\eps^2+ \frac{d_0|k|^3}{2c}\tau + O(\eps^2)\,.
\end{align*} 
We set $b_3:=\frac{d_0}{2c}$ and use Theorem \ref {thm:mainresultQb}.
We conclude that, in the limit $\eps\to 0$, the solution $u$ is well
approximated by $\mathcal{Q}_b \hat u_0$: The profile function $V =
\mathcal{F}_1^{-1} \circ J_b\circ\mathcal{R} \hat u_0$ provides a good
approximation of the solution of the weakly dispersive equation
\eqref{eq:weaklydispersive}. The profile $V$ satisfies the linearized
KdV-equation
\begin{equation*}
  \partial_\tau V(z,q,\tau)=b_3\partial^3_z V(z,q,\tau)\,.
\end{equation*}
With this result we recover the profile analysis of \cite
{Schweizer-Theil-2017} in dimension $d=1$ and $d=2$, and extend it to
dimension $d=3$.

\section{Analysis of the reconstruction operator}
\label{sec.reconstruction-analysis}

Our main result Theorem \ref{thm:mainresultQb} states that solutions
to a (dispersive) wave equation can be recovered approximately by the
reconstruction operator $\mathcal{Q}_b$. This requires a study of the
expression $\hat{\mathcal{Q}}_b \hat u_0 = \mathcal{F}_d\circ
\mathcal{S}\circ \mathcal{F}_1^{-1}\circ J_b\circ \mathcal{R} (\hat
u_0)$. As we have already seen, the core result regards the outer
part, the expression $(\mathcal{F}_d\circ \mathcal{S}) V$. This part
is analyzed in Theorem \ref{thm:rec} below.

\begin{assumption}
  \label{ass:growthcondition}
  Let the dimension be $d\in\{1,2,3\}$.  On $V\in
  L^\infty(0,\infty;X_S)$ we assume the following.
  \begin{itemize}
  \item[(i)] There exist $C,\alpha>0$ such that for every $\tau\in
    (0,\infty)$ and $q\in S^{d-1}$
    \begin{equation}
      \label{eq:growthcondition1}
      |V(z,q,\tau)|\leq C(1+|z|)^{-d-\alpha}\,. 
    \end{equation}
  \item[(ii)] The Fourier transform $\hat V := \mathcal{F}_1 V$ has
    the property that, for every $\tau\in (0,\infty)$, the function
    \begin{equation*}
      \R\times S^{d-1} \ni (\xi, q) \mapsto \hat V(\xi ,q, \tau) \in \C
    \end{equation*}
    is of class $C^{d-1}(\R\times S^{d-1};\C)$.
  \end{itemize}
\end{assumption}

In Theorems \ref {thm:mainresultQb} and \ref {thm:rec}, we demand that
$V^\rho:=(\mathcal{F}_1^{-1}\circ J_b\circ \mathcal{R}_\rho)\hat u_0$
satisfies Assumption \ref{ass:growthcondition}. Actually, this is not
too restrictive.
\begin{remark}
  \label{rem:regularityassumptions}
  Let $\hat u_0$ be a smooth function with compact support. Then $\hat
  V^\rho := (J_b\circ \mathcal{R}_\rho)\hat u_0$ has also compact
  support. Moreover, since $\mathcal{R}_\rho$ uses the regularization
  of $|\xi|^{(d-1)/2}\one_{\{\xi>0\}}$, the smoothness of $\hat u_0$
  is inherited by $\hat V^\rho$.  Smoothness of $\hat V^\rho$ implies
  the decay property \eqref{eq:growthcondition1} of $V^\rho =
  \mathcal{F}_1^{-1} \hat V^\rho$ in $z$. We conclude that Assumption
  \ref{ass:growthcondition} is satisfied.
\end{remark}
 
We are now in the position to prove our core result. 

\begin{theorem} [The shell operator in Fourier space]
  \label{thm:rec}
  In dimension $d\in \{1,2,3\}$ let $\hat V\in L^\infty(0,\infty;X_s)$
  satisfy $\hat V(\xi,q,\tau) = 0$ for every $\xi<0$ and let $V :=
  \mathcal{F}_1^{-1} \hat V^\rho\in L^\infty(0,\infty;X_s)$ satisfy
  Assumption \ref{ass:growthcondition}. Consider the ring-like
  solution $\mathcal{S} V$ and its Fourier transform
  $\mathcal{F}_d\circ \calS(V)$. For every $k\in \R^d\setminus\{0\}$
  and every $\tau>0$ holds
  \begin{equation}
    \label{eq:claim-thm}
   (\mathcal{F}_d\circ \calS)(V)(k,\tau/\eps^2)e^{i c |k| \tau/\eps^2}
    \to \left(\frac{|k|}{2\pi i}\right)^{-(d-1)/2}
    \hat V\left(|k|,q =\frac{k}{|k|},\tau\right)
  \end{equation}
  as $\eps\to 0$. Moreover, the convergence holds as weak convergence
  in $L^2(\R^d)$.
\end{theorem}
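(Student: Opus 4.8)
The plan is to compute the Fourier integral explicitly in polar coordinates and to extract its large-time asymptotics by a stationary phase argument on the sphere. First I would write $k=|k|\omega$ with $\omega=k/|k|$, pass to polar coordinates $x=rq$ with $r>0$, $q\in S^{d-1}$, and substitute $z=r-ct$ where $t=\tau/\eps^2$. Using the definition of $\mathcal{S}$, this turns the $d$-dimensional Fourier transform into
\[
(\mathcal{F}_d\circ\mathcal{S})(V)(k,t)=\int_{-ct}^{ct}\int_{S^{d-1}}\frac{(ct+z)^{d-1}}{(ct)^{(d-1)/2}}\,V(z,q,\tau)\,e^{-iz|k|\,\omega\cdot q}\,e^{-ict|k|\,\omega\cdot q}\,dS(q)\,dz,
\]
where the cutoff $\one_{\{|x|<2ct\}}$ becomes the range $z\in(-ct,ct)$. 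The factor $e^{-ict|k|\,\omega\cdot q}$ carries the large parameter $ct=c\tau/\eps^2\to\infty$, while the remaining amplitude (including $e^{-iz|k|\,\omega\cdot q}$) is a slowly varying factor depending on the parameter $z$.

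The inner $q$-integral is an oscillatory integral over $S^{d-1}$ whose phase $q\mapsto-\omega\cdot q$ has exactly two nondegenerate critical points: the forward pole $q=\omega$, where $\omega\cdot q$ is maximal, and the backward pole $q=-\omega$, where it is minimal. For each fixed $z$ I would apply the stationary phase result of Section \ref{sec.stat-phase}. At $q=\omega$ the critical value is $\omega\cdot q=1$, producing the phase $e^{-ict|k|}$; since the Hessian of $-\omega\cdot q$ on the sphere equals $+\mathrm{Id}$ on the $(d-1)$-dimensional tangent space, the stationary phase factor is $(2\pi/(ct|k|))^{(d-1)/2}e^{i\pi(d-1)/4}$. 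Crucially, the geometric prefactor $(ct+z)^{d-1}/(ct)^{(d-1)/2}$ behaves like $(ct)^{(d-1)/2}$ and cancels the $(ct)^{-(d-1)/2}$ decay from stationary phase, leaving an $O(1)$ expression; after multiplying by $e^{ic|k|t}$ the factors $e^{\mp ict|k|}$ cancel, and letting $ct\to\infty$ the $z$-integral converges to $\int_\R V(z,\omega,\tau)e^{-iz|k|}\,dz=\hat V(|k|,\omega,\tau)$. Assembling the constants through $(2\pi)^{(d-1)/2}e^{i\pi(d-1)/4}=(2\pi i)^{(d-1)/2}$ reproduces exactly the right-hand side of \eqref{eq:claim-thm}. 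The backward pole $q=-\omega$ instead yields, after the same cancellation, a term proportional to $e^{2ic|k|t}\int_\R V(z,-\omega,\tau)e^{iz|k|}\,dz=e^{2ic|k|t}\,\hat V(-|k|,-\omega,\tau)$, which vanishes by the hypothesis $\hat V(\xi,\cdot)=0$ for $\xi<0$. Throughout, the decay bound \eqref{eq:growthcondition1} makes the amplitude integrable in $z$, justifies replacing $(-ct,ct)$ by $\R$, and dominates the stationary phase remainders, while the $C^{d-1}$ regularity in Assumption \ref{ass:growthcondition}(ii) supplies the amplitude smoothness that the expansion on $S^{d-1}$ requires (and reduces, for $d=1$, to the trivial evaluation of a two-point sum).

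Finally, for the weak convergence I would invoke the boundedness of $\mathcal{S}$ proved above: since $\|\mathcal{S}V(\cdot,t)\|_X$ is bounded uniformly in $t$, the family $(\mathcal{F}_d\circ\mathcal{S})(V)(\cdot,\tau/\eps^2)\,e^{ic|\cdot|\tau/\eps^2}$ is bounded in $L^2(\R^d)$ uniformly in $\eps$, and a bounded $L^2$-family converging pointwise almost everywhere converges weakly to its pointwise limit (any weak subsequential limit agrees almost everywhere with the pointwise limit, e.g.\ by Mazur's lemma). The main obstacle I anticipate is the rigorous control of the coupled double limit: the stationary phase large parameter and the growing integration window $(-ct,ct)$ are both governed by $ct=c\tau/\eps^2$, so I must show that the stationary phase remainder is integrable in $z$ \emph{uniformly} and vanishes after integration. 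This is precisely where the decay exponent $-d-\alpha$ in \eqref{eq:growthcondition1} and the sharp $C^{d-1}$ smoothness (tightest in $d=3$) enter, and where a careless interchange of the $q$- and $z$-integrations would break the argument.
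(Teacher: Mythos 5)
Your overall architecture is right, and most of the ingredients match the paper's: polar coordinates, the substitution $z=r-ct$, the cancellation of the geometric prefactor $(ct+z)^{d-1}/(ct)^{(d-1)/2}$ against the $(ct)^{-(d-1)/2}$ stationary phase decay, the role of the support hypothesis $\hat V(\xi,\cdot,\cdot)=0$ for $\xi<0$ in killing the backward pole, the constant $(2\pi i)^{(d-1)/2}$, and the bounded-plus-pointwise argument for the weak convergence. The one place where you diverge is the order of the two integrations, and that is exactly where your argument has a genuine gap that you flag but do not close. You propose to apply stationary phase on $S^{d-1}$ \emph{for each fixed $z$}, with the factor $e^{-iz|k|\,\omega\cdot q}$ absorbed into the amplitude, and then to integrate over $z\in(-ct,ct)$. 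But this amplitude is not slowly varying uniformly: its $C^m$-norm in $q$ grows like $|z|^m$, and $|z|$ ranges up to $ct=c\tau/\eps^2$, which is the same large parameter driving the oscillation. Any quantitative stationary phase remainder you could integrate in $z$ would (a) require at least $C^2$ regularity of the amplitude in $q$ in dimension $d=2$, whereas Assumption \ref{ass:growthcondition}(ii) only provides $C^{d-1}=C^1$ there, and (b) is in any case not supplied by Lemma \ref{lem:spherical}, which is purely qualitative (a fixed, $N$-independent test function, convergence without a rate). So the step ``the stationary phase remainder is integrable in $z$ uniformly and vanishes after integration'' cannot be carried out with the tools the paper makes available, and it is not obviously true under the stated hypotheses.

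The paper avoids this entirely by reversing the order: after replacing the prefactor by $(ct)^{(d-1)/2}$ (respectively $ct+2z$ for $d=3$) and extending the $z$-integral to $\R$ — with the error terms $G^\eps$ controlled by the decay \eqref{eq:growthcondition1} via a $|z|\le\delta/\eps$ versus $|z|>\delta/\eps$ splitting — it performs the $z$-integration \emph{first}, which collapses the inner integral to $\hat V(q\cdot k,q,\tau)$ (plus $i\partial_\xi\hat V(q\cdot k,q,\tau)$ in $d=3$). The sphere integral then has a single test function $q\mapsto\hat V(q\cdot k,q,\tau)$ that is independent of the large parameter and is $C^1$ by Assumption \ref{ass:growthcondition}(ii), so the tailor-made qualitative Lemma \ref{lem:spherical} applies directly, and its support hypothesis is what eliminates the backward pole. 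If you swap your two integrations in this way, your sketch becomes essentially the paper's proof; as written, the coupled double limit you identify as ``the main obstacle'' is not an anticipated technicality but the missing core of the argument.
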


\begin{proof}[Proof of Theorem \ref{thm:rec}] 
  It suffices to prove, for $k\neq 0$,
  \begin{equation}
    \label{eq:claim-thm2}
    Q^\eps(k,\tau) 
    := e^{i c |k| \tau/\eps^2}\, (\mathcal{F}_d\circ \calS)(V)(k,\tau/\eps^2)
    \to\left(\frac{|k|}{2\pi i}\right)^{-(d-1)/2}
    \hat V\left(|k|,q=\frac{k}{|k|},\tau\right)\,.
  \end{equation}
  Indeed, since the operator $\mathcal{F}_d\circ \calS$ is bounded and
  since $|e^{i c |k| \tau/\eps^2}|=1$, for every $\tau>0$, the
  sequence $Q^\eps(\cdot,\tau)$ is uniformly bounded in
  $L^2(\R^d)$. Therefore there exists, up to a subsequence, a weak
  limit in $L^2(\R^d)$. Since weak and pointwise limits always
  coincide, we conclude the weak convergence of $Q^\eps(\cdot,\tau)$
  to the right hand side of \eqref{eq:claim-thm2}.
		
  We show the pointwise convergence in five steps.

  \smallskip {\em Step 1: Calculation of the quantity of interest.}
  We calculate the left hand side of \eqref {eq:claim-thm2}.
  Definition \ref{def:shelloperator} of the shell operator
  $\mathcal{S}$ provides
  \begin{align*}
    \mathcal{S}V(x,t) =  
    \frac1{(ct)^{(d-1)/2}} V\left(|x| - ct, \frac{x}{|x|},\eps^2 t\right)\one_{\{|x|<2ct\}}\,.
  \end{align*}
  We calculate the Fourier transform in polar coordinates, $x = r q$
  with $r>0$ and $q\in S^{d-1}$,
  \begin{align*}
    (\mathcal{F}_d\circ \calS) (V)(k,t) &= \int_{\R^d} e^{-ix\cdot k}
    (\mathcal{S}V)(x,t)\, dx\\ 
    &= \int_0^{\infty} \int_{S^{d-1}} r^{d-1}
    e^{-ir q\cdot k} (\mathcal{S}V)(rq,t)\, dS(q)\, dr\,.
  \end{align*}
  We insert $\mathcal{S} V$ from above. Evaluating in $t =
  \tau/\eps^2$ we find
  \begin{align}
    \label{eq:formula-star}
    Q^\eps(k,\tau)
    = e^{i c |k| \tau/\eps^2}\int_0^{2c\tau/\eps^2} \int_{S^{d-1}}  
    \frac{r^{d-1} e^{- ir q\cdot k}}{(c\tau/\eps^2)^{(d-1)/2}} 
    V\left(r-c\frac{\tau}{\eps^2},q, \tau\right)
    dS(q)\, dr\,.
  \end{align}
  To simplify, we write $r = c\tau/\eps^2 + z$ with a new variable
  $z\in \R$; the integration over $r$ is replaced by an integration
  over $z$. We find
  \begin{equation*}
    Q^\eps(k,\tau)
     =e^{i c |k| \tau/\eps^2}\int_{-c\tau/\eps^2}^{c\tau/\eps^2} \int_{S^{d-1}} 
    \frac{(c\tau/\eps^2 + z)^{d-1}}{(c\tau/\eps^2)^{(d-1)/2}} e^{-i q\cdot k\, c\tau/\eps^2} 
    e^{-i z q\cdot k}\, 
     V(z,q,\tau)\,
    dS(q)\, dz\,.
    \label{eq:formula-star2}
 \end{equation*}

 \smallskip {\em Step 2: Approximation.}  We treat the cases $d \in
 \{1,2\}$ and $d=3$ differently.

 \smallskip {\em Case $d \in \{1,2\}$. }  We use the approximations
 $\int_{-c\tau/\eps^2}^{c\tau/\eps^2} \approx \int_\R$ and
 $\frac{(c\tau/\eps + z)^{d-1}}{(c^\tau/\eps^2)^{(d-1)/2}} \approx
 (c\tau/\eps^2)^{(d-1)/2}$ and write
  \begin{equation}
    \label{eq:Q-AFG-0}
    Q^\eps(k,\tau) = A_0^\eps(k,\tau) + G_0^\eps(k,\tau)
  \end{equation}
  with
  \begin{align*}
    &A_0^\eps(k,\tau) = e^{i c |k| \tau/\eps^2}\int_\R \int_{S^{d-1}} 
    (c\tau/\eps^2)^{(d-1)/2} e^{-i q\cdot k\, c\tau/\eps^2} 
    e^{-i z q\cdot k}\, 
     V(z,q,\tau)\,
    dS(q)\, dz\,,\\
    & G_0^\eps(k,\tau)= 
    e^{i c |k| \tau/\eps^2}\int_\R\int_{S^{d-1}}e^{-i q\cdot k( c\tau/\eps^2+z)}V(z,q,\tau) \times\\
    &\qquad\qquad\qquad\qquad \times \left[\frac{(c\tau/\eps^2 + z)^{d-1}}{(c\tau/\eps^2)^{(d-1)/2}}
      \one_{\{|z|<c\tau/\eps^2\}}-(c\tau/\eps^2)^{(d-1)/2}\right] \,dS(q)\, dz\,.
  \end{align*}

  {\em Case d=3:} In three dimensions, we use higher order
  approximations: $\int_{-c\tau/\eps^2}^{c\tau/\eps^2} \approx
  \int_\R$ and $\frac{(c\tau/\eps^2 + z)^{2}}{(c\tau/\eps^2)} \approx
  c\tau/\eps^2+2z$.  This allows to write
  \begin{equation}
    \label{eq:Q-AFG-1}
    Q^\eps(k,\tau) = A_1^\eps(k,\tau) + G_1^\eps(k,\tau)
  \end{equation}
  with
  \begin{align*}
    &A_1^\eps(k,\tau) = e^{i c |k| \tau/\eps^2}\int_\R \int_{S^{d-1}}
    (c\tau/\eps^2 + 2z) e^{-i q\cdot k\, c\tau/\eps^2} e^{-i z q\cdot
      k}\, V(z,q,\tau)\,
    dS(q)\, dz\,,\\
    & G_1^\eps(k,\tau)=
    e^{i c |k| \tau/\eps^2}\int_\R\int_{S^{d-1}}e^{-i q\cdot k( c\tau/\eps^2+z)}V(z,q,\tau)\times\\
    &\qquad\qquad\qquad\qquad \times \left[\frac{(c\tau/\eps^2 +
        z)^2}{(c\tau/\eps^2)}\one_{\{|z|<c\tau/\eps^2\}}-(c\tau/\eps^2+2z)\right]
    \,dS(q)\, dz\,.
  \end{align*}

  \smallskip {\em Step 3: Simplifying the expression for $A^\eps_0,
    A^\eps_1$.} One of the integrals in the formulas for $A_i^\eps$
  can be evaluated. Indeed, in $A_0^\eps$ and in one of the two terms
  of $A_1^\eps$, we recognize
  \begin{equation*}
    \int_\R  V(z,q,\tau)e^{-i z q\cdot k}\,dz = \hat V(q\cdot k,q,\tau)\,.
  \end{equation*}
  In dimension $d=3$, we find
  \begin{equation*}
    \int_\R  z\, V(z,q,\tau)e^{-i z q\cdot k}\,dz 
    = i\partial_{\xi}\hat V(\xi=q\cdot k,q,\tau)\,,
  \end{equation*}
  where integrability of all terms is assured by Assumption \ref
  {ass:growthcondition}.
	
  The formula for $A_0^\eps$ simplifies to
  \begin{align}
    \nonumber 
    &A^\eps_0(k,\tau) =
    \int_{S^{d-1}} (c\tau/\eps^2)^{(d-1)/2}\,
    e^{i (|k| - q\cdot k)\, c\tau/\eps^2}\,
    \hat V(q\cdot k,q,\tau)\, dS(q)\\
    &\ = \int_{S^{d-1}} \left(\frac{|k|}{2\pi i} c\tau/\eps^2\right)^{(d-1)/2}\,
    e^{i (1 - q\cdot k/|k|)\, |k| c\tau/\eps^2}\,
    \left[ \left(\frac{|k|}{2\pi i}\right)^{-(d-1)/2} \hat V(q\cdot k,q,\tau)\right]
    \, dS(q)\,.
    \label{eq:formula-star3} 
  \end{align}
  The formula for $A_1^\eps$ simplifies to
  \begin{align}
    \nonumber 
    &A^\eps_1(k,\tau) 
    = \int_{S^2}   e^{i (|k| - q\cdot k)\, c\tau/\eps^2}\,
    \left( c\tau/\eps^2\hat V(q\cdot k,q,\tau) 
      + 2i\partial_{\xi}\hat V(q\cdot k,q,\tau)\right)\, dS(q)\nonumber\\
    &\ = \int_{S^2} \left(\frac{|k|}{2\pi i} c\tau/\eps^2\right)\,
    e^{i (1 - q\cdot k/|k|)\, |k| c\tau/\eps^2}\,\left[\left(\frac{|k|}{2\pi i}\right)^{-1} 
      \hat V(q\cdot k,q,\tau)\right]\, dS(q)\nonumber\\
    &\quad + \frac{2\eps^2}{c\tau}i\int_{S^2} \left(\frac{|k|}{2\pi i} c\tau/\eps^2\right)\,
    e^{i (1 - q\cdot k/|k|)\, |k| c\tau/\eps^2}\,\left[\left(\frac{|k|}{2\pi i}\right)^{-1}
      \partial_{\xi} \hat V(q\cdot k,q,\tau)\right]\, dS(q)\,.
    \label{eq:formula-star4} 
  \end{align}

  \smallskip {\em Step 4: Application of a stationary phase limit.}
  We consider the terms in squared brackets in \eqref
  {eq:formula-star3} and \eqref {eq:formula-star4} as
  test-functions. Denoting them as $\phi = \phi(q)$, we exploit Lemma
  \ref{lem:spherical} to calculate the limit $\eps\to 0$. The lemma
  provides
  \begin{align}
    \label{eq:integraltestfunction}
    \int_{S^{d-1}} \left(\frac{|k|}{2\pi i} c\tau/\eps^2\right)^{(d-1)/2}\,
    e^{i (1 - q\cdot k/|k|)\, |k| c\tau/\eps^2}\, \phi(q) \, dS(q)\to \phi(k/|k|)\,.
  \end{align}
  Indeed, since $k\in \R^d\setminus\{0\}$ is held fixed, we can use
  Lemma \ref{lem:spherical} with $\kappa := k/|k|$ and the sequence of
  numbers $N := |k|c\tau/\eps^2$, which tends to $+\infty$.

  Let us check if the assumptions of Lemma \ref{lem:spherical} are
  satisfied. The lemma requires that $\phi:S^{d-1} \to \C$ is
  supported on the half sphere defined by $\kappa$. This requirement
  is satisfied since we demanded $\hat V(\xi,q,\tau) = 0$ for every
  $\xi<0$.  Moreover, Lemma \ref{lem:spherical} requires that
  $\phi:S^{d-1} \to \C$ is of class $C^1$. In dimension $d=1$, this is
  no further requirement. In dimension $d=2$, we need that $q\mapsto
  \hat V(q\cdot k,q,\tau)$ is of class $C^1$; this follows from
  Assumption \ref{ass:growthcondition}, (ii). In dimension $d=3$, we
  need that both $q\mapsto \hat V(q\cdot k,q,\tau)$ and $q\mapsto
  \del_\xi \hat V(q\cdot k,q,\tau)$ are of class $C^1$; also this
  follows from Assumption \ref{ass:growthcondition}, (ii).

  \smallskip The second term in \eqref{eq:formula-star4} vanishes in
  the limit as $\eps\to 0$ due to \eqref{eq:integraltestfunction} and
  the factor $\eps^2$ in front of the integral.  The limits of the
  remaining terms are determined by evaluating $\hat V(q\cdot
  k,q,\tau)$ in the point $q = \kappa = k/|k|$. We find $\hat
  V\left(\frac{k}{|k|}\cdot k,\frac{k}{|k|},\tau\right) = \hat
  V\left(|k|,\frac{k}{|k|},\tau\right)$. This yields, for $k\neq 0$,
  \begin{equation*}
    \lim_{\eps\to 0} A^\eps_0(k,\tau)
    = \lim_{\eps\to 0} A^\eps_1(k,\tau)
    = \left(\frac{|k|}{2\pi i}\right)^{-(d-1)/2} 
    \hat V\left(|k|,\frac{k}{|k|},\tau\right)\,. 
  \end{equation*}
  This is the desired limit in \eqref {eq:claim-thm2}. Once we show
  that the error terms $G_0^\eps$ and $G^\eps_1$ are small, we have
  shown \eqref {eq:claim-thm2} and hence the Theorem.

  \smallskip {\em Step 5: Calculation of the error terms $G_0^\eps$
    and $G^\eps_1$.}  We show the result for the three dimensions
  separately.
	
  \smallskip {\em Dimension $d=1$.} In the case $d=1$ we have
  \begin{equation*}
    G_0^\eps(k,\tau)
    =e^{i c |k|\tau/\eps^2} \sum_{q=\pm 1}\int_{\R} e^{-i q\cdot k( c\tau/\eps^2+z)} 
    V(z,q,\tau)  \one_{\{|z|\geq c\tau/\eps^2\}}\, dz\,.
  \end{equation*}
  Exploiting $\left|e^{i c |k|\tau/\eps^2}e^{-i q\cdot k(
      c\tau/\eps^2+z)}\right| = 1$ we find
  \begin{align*}
    |G_0^\eps(k,\tau)|\leq \sum_{q=\pm 1}
    \int_{\R}|V(z,q,\tau)|\, \one_{\{|z|\geq c\tau/\eps^2\}}\, dz \to 0
  \end{align*}
  as $\eps\to 0$; here we exploit that Assumption
  \ref{ass:growthcondition} provides a decay rate that assures
  $V(\cdot, q,\tau)\in L^1(\R)$ uniformly in $q$ and $\tau$.
	
  \smallskip {\em Dimension $d=2$.} In the case $d=2$ we find
  \begin{align*}	
    |G_0^\eps(k,\tau)|\leq \int_\R\int_{S^{1}}|V(z,q,\tau)|
    \left|\frac{c\tau/\eps^2 + z}{(c\tau/\eps^2)^{1/2}}
      \one_{\{|z|<c\tau/\eps^2\}}-(c\tau/\eps^2)^{1/2}\right| \,dS(q)\, dz\,.
  \end{align*}
  Since $S^1$ has the finite measure $2\pi$, it suffices to show the
  convergence
  \begin{equation*}
    \int_\R|V(z,q,\tau)|
    \left|\frac{c\tau/\eps^2 + z}{(c\tau/\eps^2)^{1/2}}
      \one_{\{|z|<c\tau/\eps^2\}}-(c\tau/\eps^2)^{1/2}\right|dz\to  0
  \end{equation*}
  as $\eps\to 0$, uniformly in $q\in S^1$. We decompose the integral
  into two parts, $|z|\leq \delta/\eps$ and $|z|>\delta/\eps$ with
  $\delta>0$ to be chosen below.  We only consider $\eps$-values with
  $c\tau/\eps>\delta$, such that
  \begin{align*}
    &\int_{|z|\leq \delta/\eps}|V(z,q,\tau)|
    \left|\frac{c\tau/\eps^2 + z}{(c\tau/\eps^2)^{1/2}}
      \one_{\{|z|<c\tau/\eps^2\}}-(c\tau/\eps^2)^{1/2}\right|dz\\
    &\quad = \int_{|z|\leq \delta/\eps}|V(z,q,\tau)|
    \left|\frac{c\tau/\eps^2 + z}{(c\tau/\eps^2)^{1/2}}-(c\tau/\eps^2)^{1/2}\right|dz\,.
  \end{align*}
  Using
  \begin{equation}
    \label{eq:estimatepowers}
    \left|\frac{c\tau/\eps^2 + z}{(c\tau/\eps^2)^{1/2}}-(c\tau/\eps^2)^{1/2}\right|
    =\left|\frac{z}{(c\tau/\eps^2)^{1/2}}\right|
    =\eps\frac{z}{(c\tau)^{1/2}}\leq \frac{\delta}{(c\tau)^{1/2}}
  \end{equation}
  for $|z|\leq \delta/\eps$, we obtain
  \begin{align*}
    &\int_{|z|\leq \delta/\eps}|V(z,q,\tau)|
    \left|\frac{c\tau/\eps^2 + z}{(c\tau/\eps^2)^{1/2}}-(c\tau/\eps^2)^{1/2}\right|dz
    \leq \frac{\delta}{(c\tau)^{1/2}}\int_{|z|\leq \delta/\eps}|V(z,q,\tau)|\,dz\\
    &\quad \leq  \frac{\delta}{(c\tau)^{1/2}}\int_{\R}|V(z,q,\tau)|\,dz\leq C\delta
  \end{align*}
  with $C = C(\tau)$, where we have used that $V(\cdot, q,\tau)\in
  L^1(\R)$ uniformly in $q$ and $\tau$. The integral over
  $|z|>\delta/\eps$ is estimated exploiting
  \begin{align*}
    &\left|\frac{c\tau/\eps^2 + z}{(c\tau/\eps^2)^{1/2}}
      \one_{\{|z|<c\tau/\eps^2\}}-(c\tau/\eps^2)^{1/2}\right|
    \leq \left|\frac{c\tau/\eps^2 + z}{(c\tau/\eps^2)^{1/2}}\right| +(c\tau/\eps^2)^{1/2} \\
    &\quad \leq 
    2(c\tau/\eps^2)^{1/2} + \left|\frac{z}{(c\tau/\eps^2)^{1/2}}\right|
    =\frac{2}{\eps}(c\tau)^{1/2} 
    + \eps\frac{|z|}{(c\tau)^{1/2}}\,.
  \end{align*}
  We find
  \begin{align*}
    &\int_{|z|> \delta/\eps}|V(z,q,\tau)|
    \left|\frac{c\tau/\eps^2 + z}{(c\tau/\eps^2)^{1/2}}
      \one_{\{|z|<c\tau/\eps^2\}}-(c\tau/\eps^2)^{1/2}\right|\,dz\displaybreak[2]\\
    &\quad \leq \int_{|z|> \delta/\eps}|V(z,q,\tau)|\left(\frac{2}{\eps}(c\tau)^{1/2} 
      + \eps\frac{|z|}{(c\tau)^{1/2}}\right)\,dz\displaybreak[2]\\
    &\quad \leq C\int_{|z|> \delta/\eps}|z|^{-2-\alpha}\left(\frac{2}{\eps}(c\tau)^{1/2} 
      + \eps\frac{|z|}{(c\tau)^{1/2}}\right)\,dz\\
    &\quad \leq C\left(\eps^{-1}(\eps/\delta)^{1+\alpha} + \eps(\eps/\delta)^\alpha\right)\,.
  \end{align*}
  In the last step we have exploited the assumption on $V$, namely
  $|V(z,q,\tau)|\leq C(1+|z|)^{-2-\alpha}$.  Choosing first $\delta>0$
  to have smallness in the first integral and then $\eps>0$ small to
  make the second integral small, we conclude
  \begin{equation*}
    |G^\eps_0(k,\tau)|\to 0\quad\text{for }\eps\to 0\,. 
  \end{equation*}
		
  {\em Dimension $d=3$.} The case $d=3$ is analogous to the case
  $d=2$. For the integral over $|z|\leq \delta/\eps$ we use
  \begin{align*}
    &\frac{(c\tau/\eps^2 + z)^{2}}{(c\tau/\eps^2)}-(c\tau/\eps^2 + 2z)
    =\frac{1}{\eps^2}\left(\frac{(c\tau+\eps^2z)^2}{c\tau}-c\tau\right)-2z\\
    &\quad = \frac{1}{\eps^2c\tau}\left((c\tau+\eps^2z)^2-(c\tau)^2\right)-2z
    =\frac{1}{\eps^2c\tau}\left(2c\tau\eps^2z+\eps^4z^2\right)-2z
    =\frac{\eps^2z^2}{c\tau}\leq \frac{\delta^2}{c\tau}
  \end{align*}
  and the fact that $V(\cdot, q,\tau)\in L^1(\R)$ uniformly in $q$.
  Concerning the integral over $|z|>\delta/\eps$ we calculate
  \begin{align*}
    \left|\frac{(c\tau/\eps^2 + z)^{2}}{(c\tau/\eps^2)}\one_{\{|z|<c\tau/\eps^2\}}-(c\tau/\eps^2+2z)\right|
    \leq 3c\tau/\eps^2  + \frac{2|z|^2}{c\tau/\eps^2} + 2|z|
  \end{align*}
  and thus
  \begin{align*}
    &\int_{|z|> \delta/\eps}|V(z,q,\tau)|
    \left|\frac{c\tau/\eps^2 + z}{(c\tau/\eps^2)^{1/2}}\one_{\{|z|<c\tau/\eps^2\}}-(c\tau/\eps^2)^{1/2}\right|\,dz\\
    &\quad \le \int_{|z|> \delta/\eps}|V(z,q,\tau)|\left(3c\tau/\eps^2  + \frac{2|z|^2}{c\tau/\eps^2} + 2|z|\right)\,dz\\
    &\quad \le C\int_{|z|> \delta/\eps}|z|^{-3-\alpha}\left(3c\tau/\eps^2  + \frac{2|z|^2}{c\tau/\eps^2} + 2|z|\right)\,dz\\
    &\quad \le C\left(\eps^{-2}(\eps/\delta)^{2+\alpha} + \eps^2(\eps/\delta)^\alpha + (\eps/\delta)^{1+\alpha}\right).
  \end{align*}
  In the last step we have exploited Assumption
  \ref{ass:growthcondition} on $V$, namely $|V(z,q,\tau)|\leq
  C(1+|z|)^{-3-\alpha}$ uniformly in $q$ and $\tau$.

  Once more, we choose first $\delta>0$ small to have the integral
  over $|z| \le \delta/\eps$ small. We then choose $\eps>0$ small to
  have the other integral small. We obtain that the error terms
  $G_0^\eps$ and $G^\eps_1$ vanish in the limit $\eps\to 0$. Up to the
  claim in \eqref {eq:integraltestfunction}, where we used the
  subsequent Lemma \ref {lem:spherical}, the theorem is shown.
\end{proof}

\section{A stationary phase convergence result }
\label{sec.stat-phase}

In the last section, the relevant small parameter was $\eps>0$; in
this section, we work with the large parameter $N := |k|c\tau/\eps^2$.
We applied in Section \ref {sec.reconstruction-analysis} the
subsequent Lemma \ref {lem:spherical} with the vector $\kappa :=
k/|k|$.

In the following, for arbitrary dimension $d\in\{1,2,3\}$, we will
demand that the test-function $\phi:S^{d-1}\to \R$ is of class
$C^1(S^{d-1})$ and that it is supported on the half-sphere $\{q\in
S^{d-1}\,|\,q\cdot \kappa \geq 0\}$.

Regarding the case $d=1$ we note that $S^{d-1} = \{ +1, -1\}$ and
that, for $\kappa = e_1 \equiv 1$, a function $\phi \in C^1(S^{d-1})$
with support in the half-sphere $\{q\in S^{d-1}\,|\, q\cdot \kappa
\geq 0\} = \{ 1\}$ is a function $\phi : \{ +1, -1\} \to \R$ with
$\phi(-1) = 0$.

\begin{lemma}\label {lem:spherical}
  Let the dimension be $d\in \{1,2,3\}$. Let $\kappa\in S^{d-1}$ be a
  point on the sphere and let $\phi \in C^1(S^{d-1};\R)$ be supported
  in $\{q\in S^{d-1}\,|\,q\cdot \kappa \geq 0\}$. Then there holds
  \begin{equation}
    \label{eq:Dirac-aim1}
    A^N_\phi := (2\pi i)^{-(d-1)/2} \int_{S^{d-1}} N^{(d-1)/2}\,
    e^{i (1 - q\cdot \kappa)\, N}\, \phi(q) \, dS(q)\to \phi(\kappa)
  \end{equation} 
  as $N\to \infty$.
\end{lemma}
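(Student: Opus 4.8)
The plan is to use the rotational invariance of the sphere and its surface measure to assume without loss of generality that $\kappa = e_1$, and then, in each dimension, to reduce the oscillatory integral to a one-dimensional Fourier-type integral whose large-$N$ asymptotics are classical. The phase $1 - q\cdot\kappa$ is nonnegative, vanishes to second order exactly at $q=\kappa$, and has nonvanishing tangential gradient everywhere on $S^{d-1}$ except at the antipode $q=-\kappa$, which lies outside $\supp\phi$; hence $\kappa$ is the unique stationary point that contributes. I would also record at the outset that, since $\phi$ is continuous and vanishes on the open lower half-sphere, it vanishes on the equator $\{q\cdot\kappa=0\}$; this makes a boundary term disappear later. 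The case $d=1$ is immediate: $S^0=\{+1,-1\}$ carries counting measure, the prefactor $N^{(d-1)/2}(2\pi i)^{-(d-1)/2}$ equals $1$, and the term at $q=-1$ is killed by the support hypothesis, so $A^N_\phi=\phi(1)=\phi(\kappa)$ for every $N$.

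For $d\in\{2,3\}$ I would apply the coarea formula to push the integral forward under $s:=1-q\cdot\kappa$, writing
\[
  \int_{S^{d-1}} e^{i(1-q\cdot\kappa)N}\,\phi(q)\,dS(q)
  = \int_0^1 e^{isN}\,\Psi(s)\,ds\,,
\]
where $\Psi(s)$ is the integral of $\phi$ over the level set $\{q\cdot\kappa=1-s\}$ weighted by the reciprocal tangential gradient of $q\cdot\kappa$; the upper limit is $1$ because $\phi$ is supported in the closed upper half-sphere. The entire problem then becomes the one-dimensional asymptotics of $\int_0^1 e^{isN}\Psi(s)\,ds$, governed by the behavior of $\Psi$ near $s=0$ (the image of the stationary point) and near $s=1$ (the equator). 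The explicit geometry gives, for $d=2$, two preimage points and a Jacobian $\sim(2s)^{-1/2}$, hence $\Psi(s)\sim\sqrt{2/s}\,\phi(\kappa)$ as $s\to0^+$; for $d=3$ the preimage circle has length $\sim2\pi\sqrt{2s}$, which cancels the Jacobian, so $\Psi$ extends continuously with $\Psi(0)=2\pi\phi(\kappa)$, while $\Psi(1)=0$ because $\phi$ vanishes on the equator.

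The two dimensions are then closed by the matching one-dimensional formula. In $d=2$ the leading contribution comes from the inverse-square-root singularity: using $\int_0^\infty s^{-1/2}e^{isN}\,ds=\sqrt{\pi}\,e^{i\pi/4}N^{-1/2}$ one obtains $\int_0^1 e^{isN}\Psi(s)\,ds\sim\sqrt{2\pi}\,e^{i\pi/4}\phi(\kappa)\,N^{-1/2}$, and multiplying by $(2\pi i)^{-1/2}N^{1/2}$ with $e^{i\pi/4}=i^{1/2}$ yields exactly $\phi(\kappa)$. In $d=3$ one integrates by parts once: the boundary term at $s=1$ vanishes since $\Psi(1)=0$, the boundary term at $s=0$ produces $-\frac{1}{iN}\Psi(0)=\frac{2\pi i}{N}\phi(\kappa)$, and multiplying by $(2\pi i)^{-1}N$ again gives $\phi(\kappa)$. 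Equivalently, one may substitute $u=2\sin(\theta/2)$ (respectively tangent-plane coordinates) to make the phase exactly $N|u|^2/2$ and compare with the Gaussian integral $\int_{\R^m}e^{iN|u|^2/2}\,du=(2\pi i/N)^{m/2}$.

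The main obstacle is controlling the remainder with only a $C^1$ amplitude rather than $C^2$. In $d=3$ this is mild: the leftover term $\frac{1}{iN}\int_0^1 e^{isN}\Psi'(s)\,ds$ is $o(N^{-1})$ by Riemann–Lebesgue, applicable because the Jacobian singularity is only $s^{-1/2}$, so $\Psi'\in L^1(0,1)$. The genuinely delicate point is $d=2$: after subtracting the model singularity $\sqrt{2/s}\,\phi(\kappa)$, the remainder is a priori only $o(1)$ near $s=0$, so plain Riemann–Lebesgue delivers $o(1)$ rather than the required $o(N^{-1/2})$. I would resolve this in the substituted variable $u$, where the amplitude $\tilde\phi\in C^1$: the linear part $u\,\tilde\phi'(0)$ integrates against $e^{iNu^2/2}$ to zero by oddness, and the corrected remainder $\tilde\phi(u)-\tilde\phi(0)-u\,\tilde\phi'(0)$, which is $C^1$ with vanishing value and derivative at the origin, contributes $o(N^{-1/2})$ after the rescaling $u=v/\sqrt N$, via dominated convergence on compact $v$-sets combined with a single integration by parts (legitimate by $C^1$ regularity) on the non-stationary tail.
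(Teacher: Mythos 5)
Your proof is correct, and in dimension $d=2$ it takes a genuinely different route from the paper's. For $d=1$ the arguments coincide. For $d=3$ they are essentially the same computation in different clothing: your coarea pushforward under $s=1-q\cdot\kappa$ produces exactly the function $\Psi(s)=2\pi\tilde\phi(\arccos(1-s))$ that the paper gets from spherical coordinates, your boundary term at $s=0$ is the paper's $\tilde\phi(0)=\phi(e_1)$, and your ``one integration by parts plus Riemann--Lebesgue with $\Psi'\in L^1$'' is the paper's weak convergence of $e^{izN}$ tested against an $L^q$ density with the same integrable $s^{-1/2}$ singularity. The real divergence is in $d=2$: the paper splits at $\theta=N^{-3/10}$, kills the outer region by two integrations by parts with explicit power counting, freezes $\tilde\phi$ at $0$ on the inner region via Lipschitz continuity, and evaluates the resulting oscillatory integral in the separate appendix Lemma \ref{lem:OscillatoryAppendix} using $z=(1-\cos\theta)N$ and Fresnel integrals; you instead put the phase into the exact normal form $Nu^2/2$ via $u=2\sin(\theta/2)$, Taylor-expand the $C^1$ amplitude to first order, read the constant off the Gaussian/Fresnel model integral, and dispose of the remainder $r$ with $r(0)=r'(0)=0$ by the rescaling $u=v/\sqrt N$ on a compact core plus one integration by parts on the tail (which costs only $O(N^{-1/2}\log N)$). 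Your version is the standard stationary-phase normal-form argument: it is cleaner, avoids the ad hoc exponent $3/10$, and makes transparent why $C^1$ regularity suffices; the paper's version is more elementary in that it never changes variables to a quadratic phase and keeps every estimate explicit. When writing yours up, spell out two small points: replacing $\int_0^1$ by $\int_0^\infty$ in the model integral $\int s^{-1/2}e^{isN}\,ds$ costs only $O(N^{-1})$ by non-stationary phase, and the parity argument for the linear term should either be run on the symmetric interval $u\in(-\sqrt2,\sqrt2)$ before symmetrizing or replaced by the observation that the symmetrized amplitude is even, so its derivative at $0$ vanishes anyway.
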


\begin{proof}
  By radial symmetry it is sufficient to consider the case
  $\kappa:=e_1$.  We show the result for the three dimensions
  separately.

  \smallskip {\em Step 1: Dimension $d=1$.} In the case $d=1$, the
  integral in \eqref {eq:Dirac-aim1} is a sum of two terms, 
  \begin{equation}
    \label{eq:Dirac-aim1-d1}
    A^N_\phi = \sum_{q\in \{\pm 1\}} e^{i (1 - q\cdot 1)\, N}\, \phi(q) 
    = \phi(1) + e^{2 i\, N}\, \phi(-1) = \phi(1)\,.
  \end{equation}
  This shows \eqref {eq:Dirac-aim1}.

  \smallskip {\em Step 2: Dimension $d=3$.} 
  We use spherical coordinates
  \begin{align*}
    q(\theta,\vth) :=
    \dvect{\cos(\theta)}{\sin(\theta)\cos(\vth)}{\sin(\theta)\sin(\vth)}
  \end{align*}
  with angles $\theta\in (0,\pi)$ and $\vth\in(0,2\pi)$ and surface
  element $J := \sqrt{\det(Dq^T Dq)} = \sin(\theta)$.  We calculate
  the expression of \eqref{eq:Dirac-aim1} for $d=3$ with spherical
  coordinates as
  \begin{align*}
    A^N_\phi &= (2\pi i)^{-1} \int_{S^2} N\, e^{i (1 - q\cdot e_1)\, N}\, \phi(q) \, dS(q)\\
    &= (2\pi i)^{-1} \int_0^\pi \int_0^{2\pi} N\, e^{i (1 - \cos(\theta))\, N}\, 
    \phi(q(\theta,\vth))\, d\vth\, \sin(\theta)\, d\theta\\
    &=-\int_{0}^\pi i N\, e^{i (1 - \cos(\theta))\, N}\sin(\theta)
    \underbrace{\left(\frac{1}{2\pi}\int_0^{2\pi}
        \phi(q(\theta,\vth))\, d\vth\right)}_{=: \tilde\phi(\theta)}\,d\theta\displaybreak[2]\\
    &= - \int_0^\pi \frac{d}{d\theta} \left[e^{i (1 - \cos(\theta))\, N}\right]
    \tilde\phi(\theta)\, d\theta\\
    &= -\left[e^{i (1 - \cos(\theta))\, N} \tilde\phi(\theta)\right]^\pi_{\theta=0} +
    \int_0^\pi e^{i (1 - \cos(\theta))\, N}
   \frac{d}{d\theta} \tilde\phi(\theta)\, d\theta\\
	&= \tilde\phi(0) +\int_0^{\pi/2} e^{i (1 - \cos(\theta))\, N}
   \frac{d}{d\theta} \tilde\phi(\theta)\, d\theta\,,
  \end{align*}
  where integration by parts is allowed because of $\tilde \phi\in
  C^1([0,\pi])$. In the last line we also exploited
  $\tilde\phi(\theta)=0$ for $\theta\in(\pi/2, \pi)$.  For
  $\tilde\phi(0)$ we obtain
  \begin{equation*}
    \tilde\phi(0) 
    = \frac{1}{2\pi}\int_0^{2\pi}\phi(q(0,\vth))\, d\vth
    = \frac{1}{2\pi}\int_0^{2\pi}\phi(e_1)\,d\vth
    =\phi(e_1)\,.
  \end{equation*}

  We turn now to the treatment of the integral. We use the
  substitution $z=1-\cos(\theta)$ with $\frac{dz}{d\theta} =
  \sin(\theta) = \sqrt{1-\cos^2(\theta)} = \sqrt{1-(1-z)^2} =
  \sqrt{z}\sqrt{2-z}$ to obtain
  \begin{align*}
    \int_0^{\pi/2} e^{i (1 - \cos(\theta))\, N} 
    \frac{d}{d\theta} \tilde\phi(\theta)\, d\theta
    =\int_0^1e^{izN} \frac{d}{d\theta} \tilde\phi
    (\arccos(1-z))\frac{1}{\sqrt{z}\sqrt{2-z}}\,dz\,.
  \end{align*}
  The factor $z\mapsto e^{izN}$ is a sequence of highly oscillatory
  functions; it converges to the mean value
  $\frac{1}{2\pi}\int_0^{2\pi} e^{iy}\,dy = 0$ weakly in $L^p(0,1)$
  for every $p\in(1,\infty)$.  Since $\frac{d}{d\theta} \tilde\phi$ is
  bounded and $\frac{1}{\sqrt{2-z}}\leq 1$ for $z\in(0,1)$, we find that
  \begin{equation*}
    z\mapsto \frac{d}{d\theta} \tilde\phi(\arccos(1-z))\frac{1}{\sqrt{z}\sqrt{2-z}}
  \end{equation*}
  is in $L^q(0,1)$ for $q\in (1,2)$; it is thus an admissible test
  function for the weak convergence property.  We obtain
  \begin{equation*}
    \int_0^{\pi/2} e^{i (1 - \cos(\theta))\, N}
    \frac{d}{d\theta} \tilde\phi(\theta)\, d\theta\to 0 
  \end{equation*}
  as $N\to\infty$, which provides the claim \eqref {eq:Dirac-aim1} for
  $d=3$.

  \smallskip {\em Step 3: Dimension $d=2$.}  We use the coordinates
  $q(\theta) := (\cos(\theta), \sin(\theta))$ with
  $\theta\in(-\pi,\pi)$, the line element is $J = 1$.  The expression
  of \eqref {eq:Dirac-aim1} is
  \begin{align*}
    A^N_\phi &= (2\pi i)^{-1/2} \int_{S^1} N^{1/2}\, e^{i (1 - q\cdot e_1)\, N}\, 
    \phi(q) \, dS(q)\\
    &= (2\pi i)^{-1/2}\, \int_{-\pi}^{\pi} N^{1/2}\, e^{i (1 - \cos(\theta))\, N}\,
    \phi(q(\theta))\, d\theta\\
    &= (2\pi i)^{-1/2}\, \int_{0}^{\pi/2} N^{1/2}\, e^{i (1 - \cos(\theta))\, N}\,
    \tilde\phi(\theta)\, d\theta\,,
  \end{align*} 
  where $\tilde\phi(\theta):=\phi(q(\theta)) + \phi(q(-\theta))$
  denotes a symmetrized version of $\phi$. We split the integral into
  two parts, $\theta\in(0,\delta)$ and $\theta\in(\delta,\pi)$, where
  the small parameter $\delta$ is chosen $N$-dependent, $\delta :=
  N^{-\beta}$ with $\beta=3/10$.  We calculate
  \begin{align*}
    &\int_{N^{-3/10}}^{\pi/2} N^{1/2}\, e^{i (1 - \cos(\theta))\, N}\,
    \tilde\phi(\theta)\, d\theta\\
    &=\frac{1}{\sqrt{N}} \int_{N^{-3/10}}^{\pi/2} \sin(\theta) i\,N\, e^{i (1 - \cos(\theta))\, N}\,
    \frac{\tilde\phi(\theta)}{i\sin(\theta)}\, d\theta\\
    &=\frac{1}{\sqrt{N}}\left[e^{i (1 - \cos(\theta))\, N}
      \frac{\tilde\phi(\theta)}{i\sin(\theta)}\right]^{\pi/2}_{\theta=N^{-3/10}}
    -\frac{1}{\sqrt{N}}\int_{N^{-3/10}}^{\pi/2}e^{i (1 - \cos(\theta))\, N}\,
    \frac{\frac{d}{d\theta}\tilde\phi(\theta)}{i\sin(\theta)}\, d\theta\displaybreak[2]\\
    &\quad +\frac{1}{\sqrt{N}}\int_{N^{-3/10}}^{\pi/2}e^{i (1 - \cos(\theta))\, N}\,
    \frac{\tilde\phi(\theta)\cos(\theta)}{i\sin^2(\theta)}\, d\theta\displaybreak[2]\\
    &=: I_N + II_N + III_N \,.
  \end{align*}
  The terms $I_N, II_N$ vanish in the limit as $N\to\infty$. Indeed,
  for $N$ sufficiently large
  \begin{align*}
    |I_N| = 
    \frac{1}{\sin(N^{-3/10})\sqrt{N}}|\tilde\phi(N^{-3/10})|\leq C\frac{N^{3/10}}{\sqrt{N}}
    =CN^{3/10-1/2}\stackrel{N\to\infty}{\rightarrow} 0
  \end{align*}
  and, since sine is monotonically increasing in $(0,\pi/2)$,
  \begin{align*}
    |II_N|&\leq\frac{1}{\sqrt{N}}\int_{N^{-3/10}}^{\pi/2}
    \left|\frac{\frac{d}{d\theta}\tilde\phi(\theta)}{\sin(\theta)}\right|\,d\theta
    \leq
    C\frac{N^{3/10}}{\sqrt{N}}\int_{N^{-3/10}}^{\pi/2}
    \left|\frac{d}{d\theta}\tilde\phi(\theta)\right|\,d\theta\\
    &\leq
    \tilde CN^{3/10-1/2}\stackrel{N\to\infty}{\rightarrow} 0\,,
  \end{align*}
  where we have used  $\tilde\phi\in C^1([0,\pi])$.
		
  To treat $III_N$, we integrate by parts once more:
  \begin{align*}
    III_N
    &= -N^{-3/2}\int_{N^{-3/10}}^{\pi/2}iNe^{i (1 - \cos(\theta))\, N}\sin(\theta)
    \frac{\tilde\phi(\theta)\cos(\theta)}{\sin^3(\theta)}\, d\theta\displaybreak[2]\\
    &=-N^{-3/2}\left[e^{i (1 - \cos(\theta))\, N}
      \frac{\tilde\phi(\theta)\cos(\theta)}{\sin^3(\theta)}\right]_{\theta=N^{-3/10}}^{\pi/2}\\
    &\quad +N^{-3/2}\int_{N^{-3/10}}^{\pi/2}e^{i (1 - \cos(\theta))\, N}
    \frac{\frac{d}{d\theta}\tilde\phi(\theta)\cos(\theta)
      -\tilde\phi(\theta)\sin(\theta)}{\sin^3(\theta)}\,d\theta\\
    &\quad -N^{-3/2}\int_{N^{-3/10}}^{\pi/2}e^{i (1 - \cos(\theta))\, N}
    \frac{3\cos^2(\theta)\tilde\phi(\theta)}{\sin^4(\theta)}\,d\theta\,.
  \end{align*}
  Since $1/\sin^3(N^{-3/10}) \leq CN^{9/10}$, the term in square
  brackets is of order $N^{-3/2}N^{9/10}=N^{-3/5}\to 0$ as
  $N\to\infty$. For the integral expressions we note that
  $1/\sin^4(N^{-3/10}) \leq CN^{6/5}$ and by assumption $\tilde\phi\in
  C^1([0,\pi])$.  We conclude that the last integral scales as
  $N^{-3/2}N^{6/5}=N^{-3/10}\to 0$ as $N\to\infty$. The second
  integral is of lower order.  This proves that $III_N\to 0$.
 
  We now treat the other part of $A^N_\phi$, the integral over the
  interval $(0,N^{-3/10})$. We first note that, since $\tilde\phi$ is
  Lipschitz-continuous, for $\theta\in (0,N^{-3/10})$ one has
  $|\tilde\phi(\theta)-\tilde\phi(0)|\leq C\theta\leq CN^{-3/10}$ and
  thus
  \begin{equation*}
    \left|\int_0^{N^{-3/10}} N^{1/2}\, e^{i (1 - \cos(\theta))\, N}\,
      \tilde\phi(\theta)\, d\theta-\int_0^{N^{-3/10}} N^{1/2}\, e^{i (1 - \cos(\theta))\, N}\,
      \tilde\phi(0)\, d\theta\right|\leq CN^{1/2-3/5}\,,
  \end{equation*}
  which vanishes in the limit as $N\to \infty$.  In view of this
  smallness, it remains to investigate the integral
  \begin{equation}
	\label{eq:toconsiderinappendix}
    \tilde\phi(0)\,(2\pi i)^{-1/2}\int_0^{N^{-3/10}} N^{1/2}\, e^{i (1 - \cos(\theta))\, N}\, d\theta\,.
  \end{equation}
  As a result we find, using Lemma \ref{lem:OscillatoryAppendix} in
  the appendix and recalling that $\tilde\phi(0)=2\phi(e_1)$, as $N\to
  \infty$,
  \begin{align*}
    &\tilde\phi(0) (2\pi i)^{-1/2} \int_0^{N^{-3/10}} N^{1/2}\, e^{i (1 - \cos(\theta))\, N}\, d\theta\\
    &\qquad \to \tilde\phi(0)(2\pi i)^{-1/2} \frac12 (2\pi i)^{1/2} = \phi(e_1)\,.
  \end{align*}
  This shows the claim \eqref {eq:Dirac-aim1} in dimension $d=2$. 	
\end{proof}

\begin{appendix}
\section{An oscillatory integral}
 
We want to evaluate the limit of the integral
\eqref{eq:toconsiderinappendix}.

\begin{lemma}
  \label{lem:OscillatoryAppendix}
  Let $\beta\in (1/6,1/2)$. Then, as $N\to \infty$,
  \begin{equation}
    \label{eq:oscillatoryintegral1}
    I_N:=
    \int_0^{N^{-\beta}} N^{1/2}\, e^{i (1 - \cos(\theta))\, N}\, d\theta
    \to\frac12 \sqrt{\pi}(1+i) = \frac12 (2\pi i)^{1/2}\,.
  \end{equation}
\end{lemma}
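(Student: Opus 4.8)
The plan is to linearize the phase by the substitution $u = 1-\cos\theta$, which is a diffeomorphism of $(0,N^{-\beta})$ onto $(0,u_N)$ with $u_N := 1-\cos(N^{-\beta})$ once $N$ is large enough that $N^{-\beta}<\pi/2$. Since $du = \sin\theta\,d\theta$ and $\sin\theta = \sqrt{u(2-u)}$ on $(0,\pi/2)$, one obtains $I_N = N^{1/2}\int_0^{u_N} e^{iuN}\,(u(2-u))^{-1/2}\,du$. A second substitution $v = Nu$ rescales this to
\[
  I_N = \int_0^{V_N} e^{iv}\,\frac{dv}{\sqrt{v}\,\sqrt{2 - v/N}}, \qquad V_N := N u_N,
\]
and the elementary bounds $x^2/4\le 1-\cos x\le x^2/2$ (valid for $0<x\le 1$) give, for large $N$, the two-sided estimate $\tfrac14 N^{1-2\beta}\le V_N\le \tfrac12 N^{1-2\beta}$, together with $v/N\le u_N\le \tfrac12 N^{-2\beta}\le 1/2$ throughout the range of integration.

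Next I would compare $I_N$ with the candidate limit $L := 2^{-1/2}\int_0^\infty e^{iv} v^{-1/2}\,dv$, writing $I_N - L = E_1 - E_2$ with the amplitude error $E_1 := \int_0^{V_N} e^{iv} v^{-1/2}\big[(2-v/N)^{-1/2} - 2^{-1/2}\big]\,dv$ and the tail $E_2 := 2^{-1/2}\int_{V_N}^\infty e^{iv} v^{-1/2}\,dv$. For $E_1$, since $w\mapsto (2-w)^{-1/2}$ is smooth on $[0,1/2]$, the mean value theorem yields $|(2-v/N)^{-1/2} - 2^{-1/2}|\le C v/N$ uniformly on the range, so that $|E_1|\le \tfrac{C}{N}\int_0^{V_N}\sqrt v\,dv = \tfrac{2C}{3N}V_N^{3/2}\le C' N^{1/2-3\beta}$, which tends to $0$ exactly because $\beta>1/6$. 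For $E_2$, a single integration by parts in the conditionally convergent oscillatory integral gives $|E_2|\le C V_N^{-1/2}\le C'' N^{\beta-1/2}$, which tends to $0$ exactly because $\beta<1/2$. Hence $I_N\to L$.

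Finally I would evaluate the limit. Rotating the contour of the Gamma integral to the imaginary axis (justified on a quarter circle by Jordan's lemma) gives $\int_0^\infty v^{s-1}e^{iv}\,dv = \Gamma(s)e^{i\pi s/2}$ for $0<\Re s<1$; with $s=1/2$ this is $\Gamma(1/2)e^{i\pi/4}=\sqrt\pi\,\tfrac{1+i}{\sqrt2}$, so $L = 2^{-1/2}\sqrt\pi\,\tfrac{1+i}{\sqrt2} = \tfrac12\sqrt\pi(1+i) = \tfrac12(2\pi i)^{1/2}$, which is the asserted value.

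I expect the only genuinely delicate point to be the amplitude error $E_1$: everything hinges on the fact that the upper limit $V_N$ grows only like $N^{1-2\beta}$, so that $v/N\le u_N\to 0$ uniformly and the factor $\sqrt{2-v/N}$ may be expanded about its value at $v=0$. It is precisely the balance between this amplitude error (forcing $\beta>1/6$) and the tail truncation $E_2$ (forcing $\beta<1/2$) that produces the admissible window $\beta\in(1/6,1/2)$; note that the oscillation $e^{iv}$ is needed only to make $E_2$ converge, whereas $E_1$ is controlled by a crude modulus bound.
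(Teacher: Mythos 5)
Your proposal is correct and follows essentially the same route as the paper: the substitution $v=(1-\cos\theta)N$, the splitting into an amplitude error of size $O(N^{1/2-3\beta})$ (using $\beta>1/6$) and a tail truncation error controlled by $V_N\sim N^{1-2\beta}\to\infty$ (using $\beta<1/2$), and the evaluation of $2^{-1/2}\int_0^\infty e^{iv}v^{-1/2}\,dv$ as $\tfrac12(2\pi i)^{1/2}$. The only cosmetic differences are that you quantify the tail via integration by parts where the paper simply passes to the convergent improper integral, and you evaluate the limit via the rotated Gamma integral rather than the Fresnel integrals.
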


\begin{proof}
  The integral in \eqref{eq:oscillatoryintegral1} can be written with
  the substitution $z = (1-\cos(\theta))N$, leading to $d\theta =
  dz/(N\sin(\theta))$. We find
  \begin{equation}
    I_N = \int_0^{(1-\cos(N^{-\beta}))N} e^{iz}
    \frac{1}{N^{1/2}\sin(\theta)}\, dz\,.
  \end{equation}
  Next we use the approximation $\frac{1}{N^{1/2}\sin(\theta)}\approx
  \frac{1}{\sqrt{2z}}$ and
  $(1-\cos(N^{-\beta}))N\approx\infty$. Indeed,
  \begin{equation*}
    CN^{1-2\beta}\leq (1-\cos(N^{-\beta}))N\leq \tilde CN^{1-2\beta}\,.
  \end{equation*}
  Since $\beta<1/2$ one finds $(1-\cos(N^{-\beta}))N\to\infty$ for $N\to \infty$. 
  Regarding the approximation of $\frac{1}{N^{1/2}\sin(\theta)}$ we obtain
  \begin{align*}
    \sin(\theta) &= \sqrt{1-\cos^2(\theta)}=\sqrt{1-(1-z/N)^2}\\
    &= \sqrt{\frac{2z}{N}}\sqrt{1-\frac{z}{2N}}
    = \sqrt{\frac{2z}{N}} + O\left((z/N)^{3/2}\right) 
  \end{align*}
  and thus, expanding the fraction,
  \begin{align*}
    \frac{1}{N^{1/2}\sin(\theta)}=\frac{1}{\sqrt{2z} +
      O\left(\sqrt{N}(z/N)^{3/2}\right)}=\frac{1}{\sqrt{2z}} +
    O\left(z^{1/2}N^{-1}\right)\,.
  \end{align*}
  Since in the domain of integration $z\leq \tilde CN^{1-2\beta}$, we finally find 
  \begin{align*}
     &\left|\int_0^{(1-\cos(N^{-\beta}))N} e^{iz}
      \left(\frac{1}{N^{1/2}\sin(\theta)}- \frac{1}{\sqrt{2z}}\right)\, dz\right|\\
    &\qquad \leq C\left(N^{1-2\beta}\right)^{1/2}N^{-1}\, N^{1-2\beta} = C
    N^{1/2-3\beta}\to 0
  \end{align*}
  as $N\to \infty$, since $\beta>1/6$. To sum up, we obtain
  \begin{align*}
    I_N &= \int_0^{(1-\cos(N^{-\beta}))N} e^{iz}
    \frac{1}{N^{1/2}\sin(\theta)}\, dz\\
    &\to \int_0^\infty
    e^{iz}\frac{1}{\sqrt{2z}}\, dz\, = \sqrt{2} \int_0^\infty e^{i
      p^2}\, dp = \frac12 \sqrt{\pi} (1+i) =\frac12 (2\pi i)^{1/2}\,.
  \end{align*}
  In the last line we used the substitution $z = p^2$ and Fresnel
  integrals: For real and imaginary part there holds $\int_0^\infty
  \sin(x^2)\, dx = \int_0^\infty \cos(x^2)\, dx =
  \sqrt{\pi}/(2\sqrt{2})$. This provides the claim of
  \eqref{eq:oscillatoryintegral1}.
\end{proof}

\end{appendix}

\bibliographystyle{abbrv}
\bibliography{lit_wave}

\end{document}